\title{Flat Vector Bundles on Very General Curves and the Codimension of Non-Abelian Hodge Loci}
\author{Nathan H. Morris}
\date{\today}
\newcommand{\ssbound}{\sqrt{g-1} + 1}
\newcommand{\codimlowerbound}{g - (r-\ell)^2 - \ell}
\begin{document}

\maketitle
\begin{abstract}
    We bound the codimension of components of the nonabelian Hodge loci in the relative de Rham moduli space over $\shm_{g,n}$ in terms of the rank and level of a complex variation of Hodge structure. If the rank is $r$ and the level is $\ell$, then the codimension of such a component is tightly controlled by $r, \ell$, and $g$. The key input is a generalization of a bound on the rank of flat vector bundles by Landesman and Litt, which we apply to the isomonodromy foliation on the relative de Rham space. 
\end{abstract}
\section{Introduction}
    Let $\shm_{g,n}$ be the moduli space of $n-$pointed curves, and let $\shc$ be the universal curve. When $X$ is a compact curve with a reduced divisor $D = x_1 + \dots + x_n$, Simpson (\cite{PMIHES_1994__80__5_0}) has constructed three relative moduli spaces over this family: $\shm_B (\shc/\shm_{g,n}, \GL_r)$, the relative Betti moduli space of semisimple local systems; $\shm_{dR}(\shc/\shm_{g,n},\GL_r),$, the relative de Rham moduli space of semistable rank $r$ flat vector bundles; and $\shm_{Dol}(\shc/\shm_{g,n}, \GL_r)$, the relative moduli space of polystable Higgs bundles $(E, \theta)$ with vanishing rational Chern class. The de Rham and Dolbeault moduli spaces are algebraic varieties, while the Betti moduli space has the structure of a local system of affine varieties. The fact that the Betti and de Rham spaces are complex-analytically isomorphic is the Riemann-Hilbert correspondence, and that the de Rham and Dolbeault space are real analytically isomorphic is the \textit{nonabelian Hodge correspondence.} 

    Within $\shm_{B}(\shc/\shm_{g,n}, \GL_r)$ we have the locus $\shm_B(\shc/\shm_{g,n}, \GL_r(\ZZ))$ of local systems which have integral monodromy representations. The Dolbeault moduli space $\shm_{Dol}(\shc/\shm_{g,n}, \GL_r)$ has a natural $\GG_m$ action given by scaling the Higgs field $\theta$, and the fixed points of this action are those Higgs bundles underlying a polarizable complex variation of Hodge structure. Denote this fixed point set by $\mathrm{Hdg}$.  Under the nonabelian Hodge correspondence, we may intersect the images of $\mathrm{Hdg}$ and $\shm_B(\shc/\shm_{g,n}, \GL_r(\ZZ))$ in $\shm_{dR} (\shc/\shm_{g,n}, \GL_r)$, obtaining the nonabelian Hodge locus $\NHL(g,n,r)$, the set of flat vector bundles underlying a $\ZZ$-polarizable variation of Hodge structure. 

    The nonabelian Hodge locus is an object of great interest, but not much is known about it. Simpson (\autocite[Theorem 12.1]{simpson1996hodgefiltrationnonabeliancohomology}) has shown that it is a union of analytic subvarieties, and conjectured that it is a union of \textit{algebraic} subvarieties. This has been proven by \cite{engel2023nonabelianhodgelocusi} in the particular case of representations whose integer points are cocompact the Zariski closure of their images. Even so, much of the geometry of these is mysterious.

    In this note, we make one advance in understanding the geometry of the nonabelian Hodge loci by bounding the codimension of its components. Recall that an $n$-pointed complex curve of genus $g$ is \textit{hyperbolic} if $2g-2+n >0$.  If $\VV$ is a variation of Hodge structure with Hodge filtration $F^\bullet$, we define the \textit{level} to be $b-a$, where $[a,b]$ is the smallest interval where $F^p \VV/ F^{p-1}\VV = 0$ for all $p \notin [a,b]$.
    
    The relative de Rham moduli space has a natural foliation given by isomonodromic deformation (see Section 4 for more.) This is obtained by deforming a pointed curve $(C,D)$ together with a flat vector bundle $(E, \nabla)$ such that the corresponding monodromy representation remains constant. We will be interested in flat vector bundles on $C$ with quasi-unipotent monodromy at the points of $D$, which naturally have the structure of a logarithmic bundle. We denote the corresponding moduli space by $\shm_{dR}(\shc/\shm_{g,n}, \shd, r)$. We prove:

    \begin{theorem}\label{nhldimension}
        Consider the relative de Rham moduli space of logarithmic vector bundles of rank $r$ over $\shm_{g,n}$ with regular singularities at the $n$, marked points. Let $Z \subset \Hdg$ be a connected component of the locus of logarithmic Higgs bundles whose very general point underlies a polarizable complex variation of Hodge structure of level $\ell$, and denote its image in $\shm_{dR}(\shc/\shm_{g,n}, \shd,r)$ by $Z^{dR}$. Let $\shl \subset \shm_{dR}(\shc/\shm_{g,n}, \shd, r)$ be a leaf of the isomonodromy foliation. Then the codimension $\delta$ of $\shl \cap Z$ satisfies 
       $$ \delta \geq \codimlowerbound$$
    \end{theorem}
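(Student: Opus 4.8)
The plan is to push the intersection $\shl\cap Z$ (read inside $\shm_{dR}$ via $Z^{dR}$) down to $\shm_{g,n}$, identify it as the parameter space of an isomonodromic family underlying a variation of Hodge structure, and then feed the relative Higgs field of that family into the generalized Landesman--Litt rank bound advertised in the introduction. So fix a component of $\shl\cap Z$ and a general point $\eta$ on it, lying over $[C,D]\in\shm_{g,n}$; thus $\eta$ records $(C,D)$ together with a flat logarithmic bundle whose monodromy $\rho$ is the representation pinned down by the leaf. Near $\eta$ the isomonodromy foliation is the horizontal foliation of the isomonodromy (nonabelian Gauss--Manin) connection on $\shm_{dR}(\shc/\shm_{g,n},\shd,r)\to\shm_{g,n}$, so $\pi\colon\shl\to\shm_{g,n}$ is a local isomorphism; hence $\shl\cap Z$ maps, near $\eta$, isomorphically onto a locally closed analytic set $B\subseteq\shm_{g,n}$ through $[C,D]$, and
\[
\delta\;=\;\mathrm{codim}\bigl(\shl\cap Z,\shl\bigr)\;=\;\mathrm{codim}\bigl(B,\shm_{g,n}\bigr).
\]
Over $B$ we obtain a family $(\mathcal{C}_B,\mathcal{D}_B)\to B$ of $n$-pointed curves with a semisimple local system $\VV$ restricting on each fibre to $\rho$ (up to isomorphism) and underlying, fibrewise, a polarizable complex variation of Hodge structure of level $\ell$ with quasi-unipotent local monodromy along $\mathcal{D}_B$. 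Using Simpson's analyticity of the Hodge locus \autocite[Theorem 12.1]{simpson1996hodgefiltrationnonabeliancohomology} and the $\GG_m$-equivariant structure on $\shm_{Dol}$, the fibrewise Hodge filtrations organize into a holomorphic, Griffiths-transverse filtration of the Deligne extension of $\VV\otimes\mathcal{O}_{\mathcal{C}_B}$, so the period map of the total space $\mathcal{C}_B$ is holomorphic; write $(E=\bigoplus_pE^p,\theta)$, $\theta\colon E^p\to E^{p-1}\otimes\Omega^1_C(\log D)$, for the induced system of Hodge bundles on the general fibre $C$, with $E^p\neq0$ exactly for $p$ in an interval of length $\ell$.

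Next is the tautological relation satisfied by isomonodromic variations. Let $v\in T_{[C,D]}B$ with Kodaira--Spencer class $\kappa(v)\in H^1(C,T_C(-D))$. Because the monodromy of $\VV$ is locally constant along $B$, the derivative of the period map in the direction $v$ --- computed from any smooth lift of $v$ to $\mathcal{C}_B$ --- is a global $C^\infty$ section of $\mathrm{End}^{-1}(E)$ whose $\bar\partial$, by holomorphicity of the period map and the definition of $\kappa(v)$, is a Dolbeault representative of the cup product $\theta\smile\kappa(v)\in H^1(C,\mathrm{End}^{-1}(E))$; being $\bar\partial$-exact, this class vanishes. So $\kappa\bigl(T_{[C,D]}B\bigr)$ lies in the kernel of
\[
\smile\theta\;\colon\;H^1\bigl(C,T_C(-D)\bigr)\;\longrightarrow\;H^1\bigl(C,\mathrm{End}^{-1}(E)\bigr),
\]
and, since $\kappa$ is injective on $T_{[C,D]}B$ (it is the Kodaira--Spencer isomorphism of $\shm_{g,n}$ restricted to $B$), $\dim B\le\dim\ker(\smile\theta)$, i.e.
\[
\delta\;\ge\;\mathrm{rank}\bigl(\smile\theta\bigr).
\]

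It remains to bound $\mathrm{rank}(\smile\theta)$ from below by $\codimlowerbound$, and this is exactly the generalized rank bound. As $\dim H^1(C,T_C(-D))=3g-3+n$, it is equivalent to $\dim\ker(\smile\theta)\le2g-3+n+(r-\ell)^2+\ell$; the proof exploits that the Hodge bundles $E^p$ live on the very general curve $C$, so Clifford-type bounds control the cohomology of the bundles built from them, while the Arakelov inequalities for a polarized variation pin down the $\deg E^p$. Heuristically the kernel splits into the $(r-\ell)^2$ block-endomorphism directions on which the Higgs field is blind, the $\ell$ extra directions from Griffiths-transversality bookkeeping across the $\ell+1$ Hodge pieces, and the remaining purely curve-geometric contribution. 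Combining this with the previous display gives $\delta\ge\codimlowerbound$, as claimed.

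The substantial obstacle is the generalized rank bound itself --- the technical heart of the paper, extending the Landesman--Litt theorem from the near-unitary, small-rank rigidity regime to arbitrary level. Within the deduction above, the points needing care are the globalization in the first step (assembling the fibrewise variations over $(\mathcal{C}_B,\mathcal{D}_B)\to B$ into a single polarized variation on the total space with a genuinely holomorphic period map, compatibly with the Deligne extension forced by the quasi-unipotent monodromy at $\mathcal{D}_B$, so the $\bar\partial$-computation is legitimate) and the reduction along the way to $\rho$ irreducible --- or, equivalently, a direct treatment of the semisimple case. As a sanity check, $\codimlowerbound$ is negative (so the theorem vacuous) when $r-\ell$ is large, matching the expectation that high-rank representations deform freely, and it should be close to sharp when $\ell\sim r$ --- the sense in which $\delta$ is \emph{tightly} controlled by $r$, $\ell$, and $g$.
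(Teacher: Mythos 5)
Your proposal is correct and follows essentially the same route as the paper: identify $Z^{dR}\cap\shl$ (via the local isomorphism $\shl\to\shm_{g,n}$) with a codimension-$\delta$ locus of $\shm_{g,n}$ over which the Hodge filtration extends isomonodromically, then invoke the generalized rank bound $r\geq\sqrt{g-\delta-\ell}+\ell$ and solve for $\delta$. Your intermediate step $\delta\geq\mathrm{rank}(\smile\theta)\geq g-(r-\ell)^2-\ell$ is just an unpacking of the first step of the rank bound's own proof (the paper's \Cref{codimrank} and steps 2--3 of \Cref{parabolicrkbound}) rather than a genuinely different argument, and deferring the second inequality to \Cref{rankbound} is exactly what the paper does.
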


    This implies the same codimension bound for components of $\NHL(\shc/\shm_{g,n}, \shd, r)$ since intersecting with the locus of flat bundles whose corresponding local system has a $\ZZ$ structure can only increase the codimension. This restricts the possible codimension of such loci, especially when $r$ and $\ell$ are relatively small compared to $g$. 

    The proof of \Cref{nhldimension} will follow more or less immediately from the following theorem, which is a generalization of a result of Landesman and Litt (cf. \cite[Theorem 1.3.4a and Corollary 1.3.6]{landesman2025geometriclocalsystemsgeneral}). Their result concerns flat vector bundles whose first-order deformations are not semistable. For a variation of Hodge structures, semistability coincides with having a flat Hodge filtration, and conversely, a longer Hodge filtration means the flat bundle admits many subsheaves. Following this lead, we generalize their result by examining isomonodromic deformations of the Hodge filtration. Our result has the advantage of incorporating the level of the filtration into the overall rank bound.
    
    Instead of proving a result just for complex variations of Hodge structure, we work in the more general setting a parabolic bundle with irreducible connection, see Section 2.1 for background on parabolic bundles. Any such bundle has a Simpson filtration, which in the case of a $\CC$-VHS is just the Hodge filtration, see Section 2.2.3. The Simpson filtration has a numerical invariant called the level, generalizing the weight of the Hodge filtration, and it is this invariant which we incorporate into our rank bound. 

    \begin{theorem}\label{rankbound}
        Let $(C,D)$ be a punctured curve equipped with a flat vector bundle $(E,\nabla)$ with irreducible monodromy and regular singularities at $D$. Suppose that the eigenvalues of the residues of $\nabla$ at the points of $D$ do not differ by consecutive integers. Let $E_\star$ be a parabolic structure on $E$ induced by $\nabla$. Suppose $(E_\star, \nabla)$ admits an isomonodromic deformation $(E_\star', \nabla')$ such that the Simpson filtration extends to a codimension $\delta$ locus of $\shm_{g,n}$. If the Simpson filtration $F^\bullet$ of $E_\star'$ has level $\ell$ and the associated graded Higgs bundle $\gr_{F^\bullet} E_\star$ is stable, then 
        $\rk E_\star \geq \sqrt{g -\delta -\ell} + \ell$.
    \end{theorem}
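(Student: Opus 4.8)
\emph{Proof idea.}
The plan is to package the hypothesis as an upper bound for the rank of an infinitesimal period map along the isomonodromy foliation, to prove a matching lower bound on a very general curve by degeneration, and to compare the two. Let $B\subset\shm_{g,n}$ be the codimension-$\delta$ locus over which $(E_\star',\nabla')$ carries a Simpson filtration extending $F^\bullet$, and fix a very general $b\in B$ with underlying pointed curve $(C,D)$, so that $T_bB\subseteq H^1(C,T_C(-D))$ has codimension $\delta$. Because the family over $B$ is isomonodromic, a tangent direction $v\in H^1(C,T_C(-D))$ deforms only the complex structure of $(C,D)$, and the first-order motion it induces on the associated graded Higgs bundle $(\gr_{F^\bullet}E_\star,\theta)$ is cup product with $\theta$; demanding that the system-of-Hodge-bundles structure (equivalently, the $\GG_m$-fixed-point structure on the Dolbeault side) survive to first order says exactly that $v$ lies in the kernel of the natural linear map
\[
\mu\colon H^1(C,T_C(-D))\longrightarrow \bigoplus_{p} H^1\!\big(C,\;\mathrm{Hom}(F^p/F^{p+1},\,F^{p-1}/F^p)\big),\qquad v\longmapsto v\cup\theta,
\]
together with the analogous conditions from the iterates $\theta^2,\dots,\theta^\ell$ (this is where the level enters: $\theta^{\ell+1}=0$, so only $\ell$ iterates are relevant). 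Since $T_bB\subseteq\ker\mu$, this already gives $\rk\mu\le\delta$.

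The substance of the argument is the reverse inequality: for a \emph{very general} $(C,D)$, and assuming $\gr_{F^\bullet}E_\star$ stable, one has $\rk\mu\ge g-(r-\ell)^2-\ell$. This generalizes the Landesman--Litt non-degeneracy estimate, and I would prove it the same way, by degeneration. Specialize $(C,D)$ to a general member of a suitable boundary stratum of $\overline{\shm_{g,n}}$ — a nodal curve whose components are elliptic or rational — chosen so that the flat bundle, the parabolic structure, the Simpson filtration and $\theta$ all degenerate in a controlled way; the hypothesis that the residue eigenvalues do not differ by consecutive integers is what guarantees that the parabolic weights, hence the Simpson filtration and its level, are preserved in the limit. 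On the special fibre $\mu$ becomes combinatorially explicit (block-triangular in a basis adapted to the components and to the grading), and stability of $\gr_{F^\bullet}E_\star$ — which forbids $\theta$ from being supported on a proper sub-Higgs-sheaf and pins down the slopes of the graded pieces $E^p$ — forces the limiting cup products to be injective away from a subspace of dimension at most $\sum_p(\rk E^p)^2$. As there are $\ell+1$ graded pieces of positive rank summing to $r$, one has $\sum_p(\rk E^p)^2\le(r-\ell)^2+\ell$ (write $\rk E^p=1+s_p$ and use $\sum s_p^2\le(\sum s_p)^2$), so on the special fibre $\rk\mu\ge g-(r-\ell)^2-\ell$; semicontinuity of rank transports this back to the very general curve.

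Combining the two inequalities gives $g-(r-\ell)^2-\ell\le\rk\mu\le\delta$, i.e.\ $(r-\ell)^2\ge g-\delta-\ell$; since the graded pieces along the Simpson filtration of an irreducible flat bundle are all nonzero we have $r\ge\ell+1>\ell$, and the displayed inequality is therefore equivalent to $\rk E_\star\ge\sqrt{g-\delta-\ell}+\ell$ when the right-hand side is real (and vacuous otherwise). The main obstacle is precisely the lower bound on $\rk\mu$: one must engineer the degeneration so that the limiting flat bundle splits into summands whose slopes make the limiting cup-product maps maximally non-degenerate, and one must check that passing to the limit is compatible with the parabolic data cutting out the Simpson filtration and with the bound on its level — this compatibility is exactly what the hypothesis on the residues buys. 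Relative to Landesman--Litt, the new ingredient is the explicit appearance of $\ell$, which comes from exploiting all of $\theta,\theta^2,\dots,\theta^\ell$ rather than $\theta$ alone and from tracking the number of steps of the Simpson filtration through the degeneration.
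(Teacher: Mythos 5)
Your first half is sound and matches the paper's setup: the condition that the Simpson filtration extends over a codimension-$\delta$ locus is indeed encoded cohomologically, via Griffiths transversality and the degeneration of the spectral sequence for the stable graded Higgs bundle, as the statement that the composite $H^1(C,T_C(-D))\to \bigoplus_p H^1(\Hom(\gr_F^p E_\star,\gr_F^{p-1}E_\star))$ induced by cup product with $\theta$ has rank at most $\delta$ (this is Lemma \ref{codimrank} together with Corollary \ref{SSdegeneratesprojection}). Your closing combinatorial step (partitioning $r$ into $\ell+1$ positive parts and using convexity) is also essentially Corollary \ref{convexarg}.

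The gap is the lower bound on $\rk\mu$, which you correctly identify as the substance of the theorem but then propose to establish by degenerating $(C,D)$ to a nodal curve in $\overline{\shm_{g,n}}$ and invoking semicontinuity. This does not work as stated, for two reasons. First, there is no family to which semicontinuity applies: the flat bundle, its isomonodromic deformation, and the Simpson filtration are only defined over a leaf of the isomonodromy foliation, which lives over Teichm\"uller space; the monodromy representation of $\pi_1(C\setminus D)$ has no canonical extension across a nodal degeneration (the fundamental group changes), and constructing a limiting Higgs bundle with controlled parabolic structure and filtration on the special fibre is itself a substantial unsolved step, not a routine one. Second, even granting such a limit, the key assertion --- that stability forces the limiting cup products to be ``injective away from a subspace of dimension at most $\sum_p(\rk E^p)^2$'' --- is stated without any mechanism; the natural bounds on the corank of $\mu$ involve $h^1$ of the Hom-bundles (hence their degrees), not just the ranks of the graded pieces. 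Note also that the Landesman--Litt estimate you cite as a model is \emph{not} proved by degeneration. The paper's actual route stays on the smooth very general curve: Serre duality converts the rank-$\le\delta$ condition on $H^1$ into a map $\psi^\vee$ from the high-slope quotient $Q_\star=\mathrm{coker}(\ad\theta)$ of $\gr_F^1\End(E_\star)\otimes\omega_C(D)$ to $\omega_C^2(D)$ whose image on $H^0$ has dimension $\le\delta$; since $\mu_\star(Q_\star)>2g-2+n$ (stability of $(\End(E_\star),\ad\theta)$), Riemann--Roch forces $Q_\star$ to have many sections while Clifford's theorem caps the sections of the globally generated part, and the tension yields $\rk\gr_F^1\End(E_\star)=\sum_p r_pr_{p-1}\ge g-\delta$ (Proposition \ref{nonggg} and Theorem \ref{parabolicrkbound}). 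If you want to complete your argument, you should replace the degeneration step with this Clifford/Riemann--Roch analysis, or else supply an actual construction of the limiting object and prove the asserted dimension count.
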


    The corresponding statement for a $\CC$-VHS follows by taking $E_\star$ above to be its Deligne canonical extension (see Section 2.2) with the parabolic structure induced by the connection, which, due to our condition of the eigenvalues of the residues of $\nabla$, will be unique. We observe that when $\ell \geq 1$ (i.e. $E$ is not semistable), our result nearly recovers Theorem 1.3.4 of \Cite{landesman2025geometriclocalsystemsgeneral}. 
    \begin{corollary}\label{semistability}
        With the notation as in \Cref{rankbound}, if the Simpson filtration $F^\bullet$ extends to the universal isomonodromic deformation of $E_\star$, and  $\rk E_\star < \ssbound$, then $E_\star$ is semistable.
    \end{corollary}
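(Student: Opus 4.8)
The plan is to argue by contraposition: assuming $E_\star$ is not semistable, I will deduce $\rk E_\star \geq \ssbound$. The first point is that non-semistability is detected by the level. For a parabolic flat bundle, semistability of $E_\star$ is equivalent to the Simpson filtration $F^\bullet$ being $\nabla$-flat --- the parabolic analogue of the fact recalled in the introduction that a $\CC$-VHS is semistable precisely when its Hodge filtration is flat (Section 2.2). Since $\nabla$ is irreducible it has no proper nonzero flat subbundle, so a $\nabla$-flat $F^\bullet$ is trivial, i.e.\ has level $0$; more generally, irreducibility forces $F^\bullet$ to have no interior gaps (a gap at $q$ would make $F^{q+1}$ a proper nonzero $\nabla$-stable subbundle), so $F^\bullet$ has exactly $\ell+1$ nonzero graded pieces and in particular $\ell \leq \rk E_\star - 1$. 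Hence our assumption gives $\ell \geq 1$.

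Next I would invoke Theorem \ref{rankbound}. The hypothesis that $F^\bullet$ extends to the \emph{universal} isomonodromic deformation means it extends over a full neighborhood of $[C,D]$ in $\shm_{g,n}$, so the codimension of the extension locus is $\delta = 0$; the remaining standing hypotheses of Theorem \ref{rankbound} (irreducible monodromy, regular singularities, the condition on the residue eigenvalues, and stability of $\gr_{F^\bullet}E_\star$) are in force by assumption. Theorem \ref{rankbound} therefore gives
$$\rk E_\star \ \geq\ \sqrt{g-\ell}+\ell.$$

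It then remains to check that $\sqrt{g-\ell}+\ell \geq \ssbound$ for all admissible $\ell \geq 1$. If $\rk E_\star \geq g$ there is nothing to prove, since $g \geq \ssbound$ for $g \geq 2$ (the statement being vacuous for small $g$, where $\ssbound \leq 1 \leq \rk E_\star$). Otherwise $1 \leq \ell \leq \rk E_\star - 1 < g-1$, and the elementary identity
$$\sqrt{g-\ell}+\ell-\bigl(\sqrt{g-1}+1\bigr)=(\ell-1)\left(1-\frac{1}{\sqrt{g-1}+\sqrt{g-\ell}}\right)\ \geq\ 0,$$
valid because $\sqrt{g-1}\geq 1$, combined with the displayed bound yields $\rk E_\star \geq \sqrt{g-1}+1 = \ssbound$. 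This proves the contrapositive.

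The one step I expect to require genuine care is the first: making precise, in the parabolic and Simpson-filtration generality rather than the classical Hodge-theoretic setting, that failure of semistability of $E_\star$ is equivalent to $\ell \geq 1$, together with the no-interior-gaps property of $F^\bullet$ that feeds the bookkeeping. Once that is in place, the proof is a formal application of Theorem \ref{rankbound} plus the above one-variable estimate, where the only remaining subtlety is the treatment of small $g$ and of $\ell$ close to $g$.
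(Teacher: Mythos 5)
Your proposal is correct and follows essentially the same route as the paper: reduce to the observation that non-semistability forces the Simpson filtration to be nontrivial (so $\ell \geq 1$), apply \Cref{rankbound} with $\delta = 0$, and minimize $\sqrt{g-\ell}+\ell$ over $\ell \geq 1$. Your treatment of the edge cases ($\ell$ near $g$, small $g$) via the algebraic identity is slightly more careful than the paper's monotonicity argument, but the substance is identical.
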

    	\begin{proof}
    		Having a nontrivial Simpson filtration extend to the universal isomonodromic deformation implies that $E_\star$ is not semistable and that the term $\delta$ in $\Cref{rankbound}$ is $0$. Simplifying the bound, we obtain
    		$$ \rk E_\star = r > \sqrt{g - \ell} + \ell.$$
    		We note that the right hand side is increasing as a function of $\ell$ when $\ell \in [1, g-1]$, so it is minimized by taking $\ell = 1$. In this case we arrive at $r > \sqrt{g-1} + 1$. On the other hand, if $\ell > 2g-1$, then $r > 2g -1$. Taking the minimum of these bounds, we see that the filtration  is forced to be trivial when $r \leq \sqrt{g-1} + 1$. 
    	\end{proof}
	
	An important setting for these results is in the case of a variation of Hodge structure, in which case the Hodge filtration is a Simpson filtration. In this case, one only has to assume irreducibility of the connection in the hypotheses of \Cref{rankbound}, as the stability of the graded Higgs bundle follows automatically \cite{simpson2008iterateddestabilizingmodificationsvector}[5.2]. If the isomonodromic deformation of $E_\star$ happens to underlie a variation of Hodge structure, then \Cref{semistability} may be upgraded to recover \cite{landesman2025geometriclocalsystemsgeneral}[Theorem 1.2.13] as well:
	
	\begin{corollary}\label{unitarymonodromy}
		With the notation as in \Cref{rankbound}, if an isomonodromic deformation of $(E_\star, \nabla)$ underlies a variation of Hodge structure and $\rk E_\star < \ssbound$, then $(E_\star, \nabla)$ has unitary monodromy. 
	\end{corollary}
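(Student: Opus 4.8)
The plan is to reduce \Cref{unitarymonodromy} to \Cref{semistability}: the only new ingredient is that the presence of a variation of Hodge structure forces the codimension term $\delta$ of \Cref{rankbound} to vanish, and once it does, semistability can be upgraded to unitarity. First I would pass to the isomonodromic deformation $(\tilde{E}_\star,\tilde\nabla)$ of $(E_\star,\nabla)$ which underlies a polarizable $\CC$-VHS. Since the conjugacy class of the monodromy representation, the genus, the rank, irreducibility of $\nabla$, the regular-singularity condition at the marked points, and the residue-eigenvalue hypothesis are all constant along the isomonodromy foliation, it is equivalent to prove that $\tilde\nabla$ has unitary monodromy; so from now on I assume $(E_\star,\nabla)$ itself underlies a $\CC$-VHS with Hodge filtration $F^\bullet$.

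By Section 2.2.3 the Hodge filtration $F^\bullet$ is the Simpson filtration of $(E_\star,\nabla)$, its level $\ell$ is the weight of the Hodge structure, and because $\nabla$ is irreducible the graded Higgs bundle $\gr_{F^\bullet} E_\star$ is stable \cite{simpson2008iterateddestabilizingmodificationsvector}[5.2]; thus all hypotheses of \Cref{rankbound} are in force. The next and crucial step is to show that for a VHS the Hodge filtration extends over the \emph{entire} universal isomonodromic deformation of $E_\star$, i.e. that $\delta = 0$. Since $(E_\star,\nabla)$ is a fixed point of the $\GG_m$-action on the Dolbeault side, Simpson's iterated-destabilizing construction of the Simpson filtration is already stationary at the central fibre, and I would argue that it deforms holomorphically to the nearby fibres of the isomonodromic family, so that $F^\bullet$ extends to a filtration by subbundles over a neighbourhood of $[C]$ in $\shm_{g,n}$.

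Granting $\delta = 0$, I claim $\ell = 0$. Otherwise $\ell \geq 1$, and \Cref{rankbound} gives $\rk E_\star \geq \sqrt{g-\ell}+\ell$, while exactly as in the proof of \Cref{semistability} one has $\sqrt{g-\ell}+\ell \geq \ssbound$ for every $\ell \geq 1$, contradicting $\rk E_\star < \ssbound$. Hence $F^\bullet$ has a single step, the Higgs field $\gr_{F^\bullet}\nabla$ vanishes, and $\gr_{F^\bullet} E_\star = (E_\star,0)$ is a parabolic-polystable Higgs bundle of parabolic degree $0$ with zero Higgs field; under the nonabelian Hodge correspondence such an object is a unitary local system. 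Therefore $\tilde\nabla$, hence the original $\nabla$, has unitary monodromy.

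I expect the genuine obstacle to be the middle step: proving rigorously that $\delta = 0$, i.e. that the Hodge filtration of a VHS propagates over the full universal isomonodromic deformation rather than degenerating on a proper closed subset of $\shm_{g,n}$. Equivalently, one must rule out that the VHS point is a special locus where the Simpson filtration jumps up, and instead show that the non-semistability of $E_\star$ persists in every isomonodromic direction --- the VHS analogue of Landesman--Litt's hypothesis on first-order deformations. I would establish this either by invoking the corresponding statement of \cite{landesman2025geometriclocalsystemsgeneral} directly, or by deducing it from the rigidity of the Hodge filtration under Griffiths transversality together with the convergence of Simpson's construction near a $\GG_m$-fixed point.
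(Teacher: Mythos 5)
Your overall route is the paper's: pass to the isomonodromic deformation that underlies the VHS, apply \Cref{rankbound} with $\delta=0$ to force the Simpson (= Hodge) filtration to be trivial when $\rk E_\star < \ssbound$, and then read off unitarity from the definiteness of the polarization on the single Hodge piece. (The paper phrases the last step slightly differently --- semistability plus \Cref{SimpsonDestabilizes} forces the Higgs field to vanish, hence the monodromy preserves the polarization --- but under irreducibility this is the same as your ``$\ell=0$'' conclusion.)

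The gap is in your justification of $\delta=0$, and your proposed mechanism would prove too much. It is \emph{not} true that the Hodge filtration of an arbitrary $\CC$-VHS extends over the entire universal isomonodromic deformation: if it did, every nearby fibre of the isomonodromic family would carry a Griffiths-transverse filtration with semistable graded Higgs bundle, and the hypothesis ``an isomonodromic deformation underlies a VHS'' would be nearly vacuous --- whereas for, say, the weight-one VHS of a non-isotrivial family of abelian varieties the filtration genuinely fails to propagate isomonodromically. The $\GG_m$-fixed-point property is a statement about the Higgs bundle on a single fibre and gives no control over deformations of the \emph{curve}; the obstruction to extending $F^\bullet$ in a direction $s\in H^1(C,T_C(-D))$ is the class of \Cref{codimrank}, which has no reason to vanish for a general VHS. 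The correct mechanism, which is how the hypothesis of \Cref{semistability} actually gets verified, is genericity along the leaf: by \Cref{analyticsub} the locus in $\shl$ where a Simpson filtration of a given numerical type extends is locally closed analytic, there are only countably many types, and so at an analytically very general point of $\shl$ the filtration that occurs extends to a full neighbourhood, i.e.\ $\delta=0$ there (this is Step 1 of the proof of \Cref{parabolicrkbound}). One must therefore read the hypothesis as in Landesman--Litt: the isomonodromic deformation to a \emph{very general} nearby curve underlies a VHS. At that very general curve both ingredients ($\delta=0$ and the VHS structure) hold simultaneously, the rank bound forces semistability, the Higgs field vanishes, and unitarity follows; since the monodromy is constant along the leaf, the original $(E_\star,\nabla)$ is unitary as well.
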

	
	In fact, our methods prove another similar result for those representations which underlie a variation of Hodge structure. Recall that for a representation $\rho: \pi_1(X,x) \to \GL_r(\CC)$ of the fundamental group of a complex variety $X$, the algebraic monodromy group $G$ is the connected component of the identity of the Zariski closure of $\img \rho$. Its Lie algebra, $\mathfrak{g}$, has the natural structure of a flat vector bundle on $X$, and in the case of a representation which underlies a variation of Hodge structure, $\mathfrak{g}$ is actually a sub-variation of Hodge structure. By similar methods to the proof of \Cref{rankbound}, we are able to provide a bound on the rank of $\mathfrak{g}$ under the same hypotheses.
	
	\begin{theorem}\label{liealgbound}
		Let $(C,D)$ be a punctured curve equipped with a flat vector bundle $(E,\nabla)$ which underlies a variation of Hodge structure. Suppose that the eigenvalues of the residues of $\nabla$ at the points of $D$ do not differ by consecutive integers, and let $(\bar{E}_\star, \nabla)$ denote the Deligne canonical extension of $(E, \nabla)$ to $C \cup D$. Let $\mathfrak{g}$ be the Lie algebra of the algebraic monodromy group of the representation associated to $(E_\star, \nabla)$. Suppose that $(\bar{E}_\star,\nabla)$ admits an isomonodromic deformation to the germ of a codimension $\delta$ locus. Then $\rank \gr_F^1 \mathfrak{g} > g-\delta$. If the level of the Hodge filtration of $\mathfrak{g}$ is $2k+1$, then $\rank \mathfrak{g} > g + 2k - \delta$.
	\end{theorem}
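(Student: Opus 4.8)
The plan is to adapt the argument behind Theorem \ref{rankbound} to the flat bundle $\mathfrak{g}$, exploiting the extra structure that $\mathfrak{g}$ carries when the original representation underlies a variation of Hodge structure. First I would recall that $\mathfrak{g} \subset \mathrm{End}(E)$ is a sub-local system, and that since $(E,\nabla)$ underlies a $\CC$-VHS, so does $\mathrm{End}(E)$, and the algebraic monodromy sub-local system $\mathfrak{g}$ is a sub-$\CC$-VHS of it (this is a theorem of Schmid/Deligne on the semisimplicity of the algebraic monodromy group for a VHS, together with the fact that a $\pi_1$-invariant subspace of a VHS is a sub-VHS of the fibers). Consequently $\mathfrak{g}$ inherits a Hodge filtration $F^\bullet\mathfrak{g}$, concentrated in degrees $[-k,k]$ when the level is $2k+1$ — wait, more precisely the level being $2k+1$ means the nonzero graded pieces span an interval of length $2k+1$; since $\mathfrak{g}$ is closed under the Lie bracket and the Hodge structure is compatible with the bracket, the filtration is symmetric around a center and one sees $\mathrm{gr}_F^1 \mathfrak{g} \neq 0$ is the "top positive" piece playing the role that $F^{\mathrm{top}}$ played for $E_\star$ in Theorem \ref{rankbound}.

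The core of the argument in Theorem \ref{rankbound} is that an isomonodromic deformation along which the Simpson/Hodge filtration extends forces the Higgs field on the associated graded to vanish against the Kodaira–Spencer class in many directions, and stability of $\mathrm{gr}_F$ then converts "the filtration extends to codimension $\delta$" into a bound relating $\delta$, the rank, and the genus via the dimension of the space of sections of a suitable Hom-bundle on the curve (a Riemann–Roch / Clifford-type estimate). Here I would run the same machine on $\mathfrak{g}$: the Hodge filtration of $\mathfrak{g}$ extends over the codimension-$\delta$ isomonodromic deformation (it is canonically built from that of $E$, which extends by hypothesis), so the induced Higgs field $\theta_{\mathfrak{g}}: \mathrm{gr}_F^p \mathfrak{g} \to \mathrm{gr}_F^{p-1}\mathfrak{g} \otimes \Omega^1_C(\log D)$ must pair trivially with the Kodaira–Spencer directions transverse to the locus where the filtration extends. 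Applying the section-counting estimate to the map out of $\mathrm{gr}_F^1 \mathfrak{g}$ — whose "target" $\mathrm{gr}_F^0\mathfrak{g}$ is nonzero and which is a nonzero Higgs field component because the monodromy is not unitary (if it were, the bound is vacuous) — yields $\mathrm{rank}\,\mathrm{gr}_F^1\mathfrak{g} > g - \delta$. For the second bound, I would iterate: each successive graded piece $\mathrm{gr}_F^j\mathfrak{g}$ for $j = 1,\dots$ contributes, via the nonvanishing of the iterated Higgs field forced by irreducibility of the algebraic monodromy action, at least one extra unit to the rank beyond what $g-\delta$ demands, and symmetry of the Hodge filtration of $\mathfrak{g}$ (it equals its own dual up to twist) doubles the contribution of the outer $k$ pairs of graded pieces, producing $\mathrm{rank}\,\mathfrak{g} > g + 2k - \delta$.

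The main obstacle I anticipate is making precise the step "the Higgs field component out of $\mathrm{gr}_F^1\mathfrak{g}$ is nonzero and survives the section-counting estimate with the right margin." Concretely, one must check that the relevant $\mathrm{Hom}$-bundle $\mathrm{Hom}(\mathrm{gr}_F^1\mathfrak{g}, \mathrm{gr}_F^0\mathfrak{g}) \otimes \Omega^1_C(\log D)$ has a space of global sections (equivalently, the obstruction space controlling extension of the filtration has dimension) large enough to force $\delta$ down by the claimed amount — this is where the genus enters, and it requires a Clifford-type inequality on the curve together with stability of $\mathrm{gr}_F\mathfrak{g}$ to rule out the pathological subsheaf behavior. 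A secondary subtlety is justifying that $\mathfrak{g}$, rather than merely $\mathrm{End}(E)$, has stable associated graded (or that the argument only needs the weaker input that $\mathfrak{g}$ is a direct summand of a polystable object and the relevant graded component is a sheaf whose subsheaves are controlled), since the clean stability hypothesis of Theorem \ref{rankbound} is not literally available for $\mathfrak{g}$; I expect one invokes \cite{simpson2008iterateddestabilizingmodificationsvector} or semisimplicity of the monodromy on $\mathfrak{g}$ to get around this. Finally, one must confirm that the hypothesis on residue eigenvalues for $(E,\nabla)$ transfers to a usable hypothesis on $\mathrm{End}(E)$ (residues of $\mathrm{End}(\nabla)$ are differences of residues of $\nabla$), so that $\mathfrak{g}$ again has a well-defined Deligne extension and parabolic structure to which the earlier machinery applies.
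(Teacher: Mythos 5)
Your proposal follows essentially the same route as the paper: realize $\mathfrak{g}$ as a sub-variation of Hodge structure of $\End(E_\star)$ (the paper's Lemma \ref{subvhsend}), restrict the Serre dual of the Kodaira--Spencer-induced map $\psi$ to $\gr_F^1\mathfrak{g}\otimes\Omega_C(D)$, factor through the cokernel of $\ad\theta$ on $\mathfrak{g}$, and run the slope/Clifford/non-ggg estimate to get $\rk\gr_F^1\mathfrak{g} > g-\delta$, with the second bound coming from counting the remaining nonzero graded pieces of the level-$(2k+1)$ filtration. The subtleties you flag are resolved exactly as you anticipate — the slope bound on quotients comes from $\mathfrak{g}$ underlying a polarizable $\CC$-VHS (polystability of degree zero of its graded Higgs bundle), and the paper's final count is the simpler observation that all $2k+1$ graded pieces other than $\gr_F^1$ are nonzero, rather than your duality-pairing argument, though both yield the stated inequality.
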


    \Cref{rankbound} follows the same strategy of \cite{landesman2025geometriclocalsystemsgeneral}, using a cohomological interpretation of isomonodromic deformations to extract our numerics. Instead of analyzing deformations of the Harder-Narasimhan filtration, we use the Simpson filtration. This incorporates the level of the filtration into the calculations, but at the cost of semistability of the associated graded pieces. While we no longer have the semistability of the quotient pieces of the Harder-Narasimhan filtration to leverage, the semistability of the graded Higgs bundle of $E_\star$ is an adequate substitute that leads to a more streamlined analysis.
    
    \subsubsection{Outline} In \S 1, we collect the facts about parabolic bundles that we will need to prove our results. Next in \S 2 we give an overview of the non-abelian Hodge correspondence, and in \S 3 we review the theory behind isomonodromic deformations. These sections are preliminary, and the proofs of the main results are all contained in \S 4. 
    
    \subsubsection{Acknowledgements}
    I would like to thank my advisor, Benjamin Bakker, for innumerable helpful conversations related to this work. I would also like to thank Daniel Litt for some helpful comments on an earlier draft of this article. 
\section{Background on Parabolic Bundles}
        In this section, we recall the basic definitions concerning parabolic bundles that we will need to prove our results. We establish no new facts about parabolic bundles, so we review this material primarily to set notation. \cite[Section 2]{landesman2025geometriclocalsystemsgeneral} contains all the facts about parabolic bundles which we need, and we follow their exposition.

        For the following definition, $C$ will be a smooth hyperbolic curve and $D = x_1, + \dots + x_n$ will be a reduced divisor. Write $X = C \cup D$ for the resulting compact curve. 
        \begin{definition}
            A parabolic bundle on $(C,D)$ is a vector bundle $E$ with, for each $x_i \in D$, a decreasing filtration $E_{x_i} = E_i^1 \supset E_i^2 \supset \dots \supset E_i^{n_i + 1} = 0$, together with an increasing sequence of real numbers, called weights, $0 \leq \alpha_j^1 < \alpha_j^2 < \dots < \alpha_i^{n_j} < 1$, again for each $x_i \in D$. We use $E_\star$ to denote the triple $(E, \{E_i\}_{x_i \in D}, \{\{\alpha_i^j\}_{j=1}^{n_i}{\}_{x_i \in D}})$.
        \end{definition}
        We will also require the notion of a coparabolic bundle, which in turn requires the definition of a parabolic sheaf:  Let $\RR$ denote the category whose objects are real numbers, and the morphisms are given by $i^{\alpha, \beta}: \alpha \to \beta$ if $\alpha \geq \beta$. Let $\shm od_{\sho_X}$ denote the category of coherent sheaves of $\sho_X$-modules. An $\RR$-filtered $\sho_X$ module is a functor $E: \RR \to \shm od_{\sho_X}$. The functor $E$ will be denoted $E_\star$, and if the subscript is specified, we denote the application to $E$ to that subscript, i.e. $E_\alpha := E(\alpha)$.

        We define $E[\alpha]$ to be the $\RR$-filtered coherent $\sho_X$-module given by $E[\alpha]_\beta := E_{\alpha + \beta}$ and $i_{E[\alpha]}^{\beta, \gamma} := i_E^{\beta + \alpha, \gamma + \alpha}.$ We have a natural transformation $i_{E}^{[\alpha,\beta]}: E[\alpha]_\star \to E[\beta]_\star$, where $i_E^{[\alpha,\beta]}(\gamma) = i_E^{\alpha + \gamma, \beta + \gamma}.$
        \begin{definition}
            A parabolic sheaf on $C = {X}\setminus D$ with respect to $D$ is an $\RR$-filtered $\sho_X$-module $E_\star$, together with an isomorphism 
            $$j_E: E_\star \otimes \sho_X(-D) \stackrel{\sim}{\to} E[1]_\star$$
            such that $i_E^{[1,0]} \circ j_E = \id_{E_\star} \otimes i_D$, where $i_D: \sho_X(-D) \to \sho_X$ is the inclusion. 
        \end{definition}

        \begin{definition}
            If $E_\star$ is a parabolic vector bundle, viewed as a parabolic sheaf, then the associated coparabolic bundle, denoted $\hat{E}_\star$, is the parabolic sheaf defined by 
            $$\hat{E}_\alpha := \colim_{\beta > \alpha} E_\beta,$$
            where the colimit is taken over all inclusion $i_E^{\alpha, \beta}$. 
        \end{definition}
        
        \begin{definition}
            If $E_\star = (E, \{E_j^i\}, \{\alpha_j^i\})$ is a parabolic bundle, then the parabolic degree of $E_\star$ is given by
            $$\pardeg(E_\star) = \deg E + \sum_{i=1}^n \sum_{j=1}^{n_j} \alpha_j^i \dim(E_j^i/E_j^{i+1}).$$

            We then define the parabolic slope to be $\mu_\star(E_\star) = \pardeg(E_\star)/\rk(E_\star).$
        \end{definition}

        Parabolic bundles are known to admit a notion of (semi)stability, hence they admit a Harder-Narasimhan filtration with essentially the same construction as for an ordinary vector bundle on a curve \cite{sesh}[Part Three, I].

        Parabolic bundles also have a cohomology theory that agrees with the underlying vector bundle, see \cite{Yokdoi:10.1142/S0129167X95000092}. In general, we have $H^i(X, E_\star) = H^i(X, E_0)$. In particular, they satisfy a form of Serre duality, where the tensor product and dual occur in the category of parabolic bundles. Again, see \cite{Yokdoi:10.1142/S0129167X95000092}, or \cite[2.6]{landesman2025geometriclocalsystemsgeneral}.

        \begin{proposition}[Serre Duality]
            Let $X$ be a smooth projective $n$-dimensional variety over an algebraically closed field with dualizing sheaf $\omega_X$. If $E_\star$ is a parabolic bundle on $X$ with respect to a smooth divisor $D$, we have the following canonical isomorphism:
            $$H^i(X, E_\star) \simeq H^{n-i} (X, \widehat{E}_\star^\vee \otimes \omega_X(D)).$$
        \end{proposition}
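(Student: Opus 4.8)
The plan is to deduce the statement from ordinary Serre duality on $X$ by making the two parabolic operations on the right-hand side --- the parabolic dual and the coparabolic functor --- explicit along $D$. As usual for Serre-type statements, the content is a canonical perfect pairing $H^i(X, E_\star)\times H^{n-i}(X, \widehat{E}_\star^\vee\otimes\omega_X(D))\to k$, so the two spaces are naturally dual; over a field this yields the displayed isomorphism. Here $\widehat{E}_\star^\vee$ is the coparabolic bundle of the parabolic dual $E_\star^\vee$ (the parabolic $\mathcal{H}om$ from $E_\star$ into $\mathcal{O}_X$ with its trivial parabolic structure); recall $E_\star^\vee$ has underlying bundle of the same rank as $E$.

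The first move is to reduce both sides to ordinary sheaf cohomology. By the fact recalled above that $H^j(X, F_\star) = H^j(X, F_0)$ for a parabolic sheaf $F_\star$, and since $(-)\otimes\omega_X(D)$ commutes with passing to the weight-$0$ piece, the assertion is equivalent to the local statement that $(\widehat{E}_\star^\vee)_0 \cong E^\vee(-D)$, where $E^\vee$ is the ordinary $\mathcal{O}_X$-dual of the underlying bundle $E$. Granting this, $(\widehat{E}_\star^\vee\otimes\omega_X(D))_0 = E^\vee\otimes\omega_X$, and classical Serre duality on $X$ --- the trace $H^n(X,\omega_X)\to k$ together with cup product --- supplies the perfect pairing with $H^i(X, E) = H^i(X, E_\star)$; one then checks locally that this pairing coincides with the intrinsic parabolic cup-product pairing.

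The core of the argument is therefore this local identification near each component of $D$, a bookkeeping of parabolic weights. Unwinding the definitions: the underlying bundle of the parabolic dual $E_\star^\vee$ is an elementary modification of $E^\vee$ along $D$ prescribed by reflecting the weights $\alpha^j \mapsto 1-\alpha^j$ and dualizing the flags --- the periodicity isomorphism $j_E$, which exchanges a weight shift by $1$ for a twist by $\mathcal{O}_X(-D)$, being the source of the $\mathcal{O}_X(-D)$. In general one has $(E_\star^\vee)_0 \supseteq E^\vee(-D)$, with equality failing precisely when $E_\star$ has a nontrivial filtration step at weight $0$ (which forces $E_\star^\vee$ to have a nontrivial weight-$0$ jump). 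Applying the coparabolic functor $\widehat{G}_\alpha = \colim_{\beta>\alpha}G_\beta$ to $E_\star^\vee$ replaces its underlying bundle by $(E_\star^\vee)_{0^+}$, removing exactly this weight-$0$ excess; one then checks the result is $E^\vee(-D)$ on the nose. This is why it is the coparabolic of the dual, rather than $E_\star^\vee$ itself, that occurs in the theorem: with $(E_\star^\vee\otimes\omega_X(D))_0$ in place of $(\widehat{E}_\star^\vee\otimes\omega_X(D))_0$ the Euler characteristics already fail to match whenever $E_\star$ carries a weight-$0$ step.

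The step I expect to be the main obstacle is precisely this weight bookkeeping --- pinning down the conventions for the parabolic dual (the weight reflection, the $j_E$-twist) and for the coparabolic functor (left- versus right-continuity of the $\mathbb{R}$-filtration) so that the boundary modifications combine into exactly $E^\vee(-D)$, rather than a spurious twist, and then promoting the resulting dimension count to a natural pairing. An alternative, clean when all weights are rational, is to realize parabolic bundles on $(X,D)$ as ordinary (equivariant) vector bundles on the $N$-th root stack $\pi\colon\mathcal{Y}\to X$ along $D$ for $N$ a common denominator, apply Serre duality on the smooth proper Deligne--Mumford stack $\mathcal{Y}$ with $\omega_{\mathcal{Y}} = \pi^*\omega_X\otimes\mathcal{O}_{\mathcal{Y}}((N-1)\mathcal{D})$ (where $N\mathcal{D} = \pi^*D$), and translate back --- the $(N-1)\mathcal{D}$ and the weight shifts built into the correspondence recombine into the $\mathcal{O}_X(D)$-twist and the coparabolic --- with real weights handled by approximation. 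In either approach, the full verification is the one in \cite{Yokdoi:10.1142/S0129167X95000092}, whose treatment we follow (compare \cite[2.6]{landesman2025geometriclocalsystemsgeneral}).
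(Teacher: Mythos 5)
The paper does not prove this proposition at all --- it is stated as background and delegated wholesale to \cite{Yokdoi:10.1142/S0129167X95000092} and \cite[2.6]{landesman2025geometriclocalsystemsgeneral} --- so there is no in-paper argument to compare against; your sketch is a correct reconstruction of the standard proof. The key reduction you identify is right and is exactly the mechanism that makes the statement consistent with the convention $H^i(X,E_\star)=H^i(X,E_0)$: with the parabolic dual taken as the internal $\mathcal{H}om$ into $\mathcal{O}_X$ with its trivial weight-$0$ structure, one checks (as you do) that $(E_\star^\vee)_0\supseteq E^\vee(-D)$ with excess exactly at the weight-$0$ jumps, and that passing to the coparabolic piece $(E_\star^\vee)_{0^+}$ collapses this to $E^\vee(-D)$ on the nose, so $(\widehat{E}_\star^\vee\otimes\omega_X(D))_0\cong E^\vee\otimes\omega_X$ and classical Serre duality applies. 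Two small remarks: the displayed statement should really carry a dual on the right-hand side (the canonical content is the perfect pairing, as you note --- without the $(-)^\vee$ the isomorphism is not canonical); and your root-stack alternative with $\omega_{\mathcal{Y}}=\pi^*\omega_X\otimes\mathcal{O}_{\mathcal{Y}}((N-1)\mathcal{D})$ is also a legitimate route for rational weights. Since you, like the paper, ultimately defer the full verification of conventions to Yokogawa, nothing further is required.
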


\section{Background on the Non-Abelian Hodge Correspondence and Variations of Hodge Structure}
We now review the non-abelian Hodge correspondence. We first describe the correspondence between Higgs bundles and flat vector bundles through the intermediate category of tame harmonic bundles. We then describe how the notion of a variation of Hodge structure fits into this correspondnce. We describe the non-abelian Hodge correspondence at the level of moduli spaces, and we end by discussing the Tannakian perspective on variations of Hodge structure. Our presentation follows \cite{brunebarbe2017semipositivityhiggsbundles}[2], \cite{arapuraparabolic}[5], and \cite{simpsonhiggs}[6].

\subsection{Flat Vector Bundles, Higgs Bundles}
Let $X$ be a complex manifold, and let $E$ be a vector bundle on $X$. 
\begin{definition}
	Let $\lambda \in \CC$. A $\lambda$-connection on $E$ is a $\CC$-linear map $D_\lambda: E \to E \otimes_{\sho_X} \Omega_X$ such that $$D_\lambda(fs) = fD_\lambda(s) + \lambda df\wedge s$$ for sections $f$ and $s$ of $\sho_X$ and $E$, respectively.
\end{definition}

When $X \subset \bar{X}$ is the complement of a simple normal crossings divisor $D$, we say that a bundle $E$ on $\bar{X}$ has a logarithmic $\lambda$ connection if instead the $\lambda$-connection lands in $E \otimes_{\sho_X} \Omega_X(\log D)$. If $E_\star$ is a parabolic bundle on $\bar{X}$, we say that $E_\star$ has a meromorphic $\lambda$-connection if the underlying vector bundle $E$ has a $\lambda$-connection landing in $E \otimes_{\sho_X(-D)} \Omega_X (-D)$ and that for every piece $E_\alpha$ of the parabolic filtration on $E_\star$, $D_\lambda$ induces a logarithmic $\lambda$-connection on $E_\alpha$. 

The category of $\lambda$-connections subsumes the categories of flat vector bundles and Higgs bundles. Indeed, when $\lambda = 1$, one recovers the original Leibniz rule for a connection, while when $\lambda=0$, one recovers the defining property of a Higgs field:

\begin{definition}
	A Higgs bundle is a holomorphic vetor bundle $(E, \bar{\partial})$ with a section $\theta \in H^0(\bar{X}, \End(E) \otimes \Omega_X^1)$. 
\end{definition} 

One can go back and forth between a Higgs bundle and a flat vector bundle through the notion of a pluriharmonic metric. Starting on the Higgs side, there is a natural hermitian metric $h$ on $E$, and one can form an adjoint $\theta^\dagger$ to $\theta$ with respect to $h$. Then one defines an connection $\Theta:= \partial + \bar{\partial} + \theta + \theta^\dagger$, which lives on the bundle $E \otimes_{\sho_X} \mathcal{C}^\infty$. On the other hand, if $(E, \nabla)$ is a flat vector bundle, its underlying smooth bundle has a hermitian metric as well, and the connection $\nabla$ decomposes into two operators $\nabla = \nabla^u + \Psi$, where $\nabla^u$ is unitary. Each of these in turn splits as $\nabla^u = \partial + \bar{\partial}$ and $\Psi = \theta + \bar{\theta},$ and one obtains an operator $D'' : = \bar{\partial} + \theta$. One says that a hermitian metric is \textit{pluriharmonic} if either of the operators $\Theta$ or $D''$ are integrable, i.e. $\theta^2$ and $(D'')^2$ are zero. In this case, a pluriharmonic metric on a Higgs bundle gives rise to a flat vector bundle and vice versa. Though they end up being equivalent, when specifying a harmonic bundle, one must either start with a Higgs bundle or a flat vector bundle, and we will start with a Higgs bundle. 
\begin{definition}
	Let $\bar{X}$ be a compact complex manifold and let $X \subset \bar{X}$ be the complement of a simple normal crossings divisor. A harmonic bundle $(E, \theta)$ on $X$ is called \textit{tame} if there exists a logarithmic Higgs bundle on $\bar{X}$ extending $(E, \theta)$. 
\end{definition}

Important in our applications will be those tame harmonic bundles which underly a complex variation of Hodge structure. 
\begin{definition}
	Let $X \subset \bar{X}$ be the complement of a simple normal crossings divisor. A polarizable complex variation of Hodge structure on $X$ is a $\mathcal{C}^\infty$ vector bundle $V$ with a direct sum decomposition $V = \bigoplus_{p} V^p$, and a flat connection $\nabla$ satisfying Griffiths transversality:
	$$\nabla(V^p) \subset A^{0,1}(V^{p+1}) \oplus A^{1,0}(V^p) \oplus A^{0,1} (V^p) \oplus A^{1,0}(V^{p-1}).$$ Moreover, we require the pieces $V^p$ to be orthogonal with respect to the natural hermitian form $h$ on $V$, and that $h$ is positive definite on $V^p$ for $p$ odd and negative definite on $V^p$ for $p$ even.
\end{definition}
To this, one associates a holomorphic flat vector bundle $E$ by taking $E = \ker \nabla \otimes_\CC \sho_X$, and one defines the Hodge filtration $F^\bullet$ on $V$ by setting $F^p V = \bigoplus_{i \geq p} V^{i}.$ An object in the non-abelian Hodge correspondence, such as a tame harmonic bundle, is said to underly a polarizable complex variation of Hodge structure if the underlying $C^\infty$ bundle on $X$ is isomorphic to a polarizable complex variation of Hodge structure $(V, \nabla)$.

\subsection{The Moduli Spaces}
	We next recall the primary objects in the non-abelian Hodge correspondence. If $f: X \to S$ is a smooth proper morphism of schemes over $\CC$, Simpson \cite{PMIHES_1994__80__5_0} has constructed three moduli spaces extending the classical Riemann-Hilbert correspondence. The relative de Rham moduli space $\shm_{dR} (X/S, r)$ is the relative moduli space of rank $r$ flat vector bundles $(E,\nabla)$, while the Dolbeault moduli space $\shm_{Dol}(X/S, r)$ is the relative moduli spaces of rank $r$ Higgs bundles. These moduli spaces are more generally part of a family of moduli spaces of $\lambda$-connections fibered over $\AA^1_\lambda$, and they are the fibers over $1$ and $0$, respectively. The relative Betti moduli space $\shm_B(X/S, \GL_r(\CC))$, on the other hand, is a local system of schemes over $S$, essentially defined by the property that $\shm_B(X/S, \GL_r(\CC))_s = \shm_B(X_s, \GL_r(\CC))$, where the latter variety is the character variety, the GIT qotient $\Hom(\pi_1(X_s, *), \GL_r(\CC))//\GL_r$. The non-abelian Hodge correspondence states that the relative de Rham and Betti moduli spaces are complex analytically isomorphic, while the de Rham and Dolbeault moduli spaces are homeomorphic.
	
	In this paper, we deal with flat vector bundles $(E,\nabla)$ on punctured curves, where the presence of boundary introduces complications. Let $f: \bar{X} \to \bar{S}$ again be a smooth proper morphism, but now where $X \subset \bar{X}$ is the complement of a simple normal crossings divisor $D$. By work of Simpson\cite{simpsonharmonic}, and greatly generalized by Mochizuki \cite{mochizukiharmonic}, there exists a categorical equivalence between polystable parabolic Higgs bundles of degree $0$ with regular singularities along $D$ and polystable parabolic vector bundles of degree zero with a connection with regular singularities along $D$. However, in the non-compact case, there is no longer an analytic isomorphism between the de Rham space and the Betti space. Indeed, given a representation $\rho: \pi_1(X, x) \to \GL_r(\CC)$, the choice of the fundamental domain of the complex exponential gives many choices for an extension of the corresponding representation to $\bar{X}$ as a logarithmic vector bundle and as a parabolic bundle (see below). To remedy this, we impose the condition of \textit{non-resonance}, i.e. that the eigenvalues of the residues of $\nabla$ do not differ by nonzero integers. Such a condition gives a local bijection between the relative space of representations of $\pi_1(X,x)$ and the relative flat vector bundles on $X$ with regular singularities along $D$.
	
	The isomonodromy foliation we consider is on  $\shm_{dR}(\shc_{g,n}/\shm_{g,n}, \shd, r)$, the relative de Rham space of logarithmic vector bundles on an $n$-pointed genus $g$ curve. As described in the next section, a vector bundle on a punctured curve with regular singularities at the punctures has a natural extension as a logarithmic bundle on the completed curve. The connection in turn gives this extension the structure of a parabolic bundle, and we will use this structure (as well as the structure of the associated parabolic Higgs bundle) to conduct our numerical analysis, but the foliation itself lives on the space of logarithmic connections, not on a space of parabolic connections.

\subsection{Tannakian Formalism}
Let $(V, \nabla)$ be a variation of Hodge structure on the complement of a simple normal crossings divisor $X = \bar{X}\setminus D$, where $\bar{X}$ is a smooth compact variety. Let $\VV = \ker \nabla$ be the corresponding local system, and let $T(\VV)$ be the neutral Tannakian category generated by $\VV$. Let $x \in X$ be a basepoint, so that $\VV$ corresponds to a representation $\rho: \pi_1(X,x) \to \GL_r(\CC)$, and let $\omega: T(\VV) \to \text{Vec}_\CC$ be the functor $\VV \mapsto \VV_x$. 
\begin{definition}
	The algebraic monodromy group of $\VV$ is the Zariski closure of the identity component of the image of $\rho$. Equivalently, it is the Tannakian group associated to $T(\VV)$. 
\end{definition}

If $G$ is the algebraic monodromy group of a variation of Hodge structure, then the Lie algebra of $G$ is denoted by $\mathfrak{g}$.

\begin{lemma}\label{subvhsend}
	Let $(E_\star, \nabla)$ be a parabolic bundle on $\bar{X}$ which underlies a variation of Hodge structure. If $G$ is the corresponding algebraic monodromy group, then $\mathfrak{g}$ is a holomorphic subbundle of $\End(E_\star)$, and is in fact a  sub-variation of Hodge structure. 
\end{lemma}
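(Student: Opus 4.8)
The plan is to realise $\mathfrak g$ first as a flat sub-local system of $\underline{\End}(\VV)$, and then to observe that this sub-local system is automatically a parabolic subbundle of $\End(E_\star)$ and underlies a sub-variation of Hodge structure. For the first step, write $\rho\colon \pi_1(X,x)\to\GL(\VV_x)$ for the monodromy of the local system $\VV = \ker\nabla$ and $H$ for the Zariski closure of $\img\rho$, so that $G = H^{\circ}$ is by definition the identity component. Since the identity component of an algebraic group is a normal subgroup, conjugation by any element of $H$ — in particular by any element of $\img\rho$ — preserves $G$, hence preserves its Lie algebra $\mathfrak g\subseteq\mathfrak{gl}(\VV_x)$. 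Thus $\mathfrak g$ is a $\pi_1(X,x)$-subrepresentation of $\mathrm{ad}\circ\rho$, i.e.\ a sub-local system $\underline{\mathfrak g}\subseteq\underline{\End}(\VV) = \VV\otimes\VV^\vee$, manifestly with regular singularities along $D$.

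Next I would pass to the compactification $\bar X$. Since $(E_\star,\nabla)$ underlies a variation of Hodge structure, so does its dual, and therefore $\End(E_\star)$, formed in the category of parabolic bundles, underlies the weight-zero polarizable complex variation of Hodge structure $\End(\VV)$; this is the compatibility of the Deligne canonical extension with duals and tensor products. Because $\underline{\mathfrak g}$ is a sub-local system with regular singularities, its canonical extension is a saturated parabolic subbundle — in particular a holomorphic subbundle — of $\End(E_\star)$, again by functoriality of the canonical extension; this gives the first assertion. For the second, I would invoke the standard fact that a sub-local system of a polarizable complex variation of Hodge structure underlies a sub-VHS with the restricted Hodge filtration: such variations are semisimple, so $\underline{\mathfrak g}$ is a flat direct summand of $\End(\VV)$, and under the description of variations of Hodge structure as the $\GG_m$-fixed points of the Dolbeault moduli space, each stable Higgs summand occurring in such a decomposition is itself $\GG_m$-fixed — the orbit of a stable summand under $\GG_m$ lies in the finite set of isomorphism classes of stable summands, and $\GG_m$ is connected, so this orbit is a point — and hence underlies a variation of Hodge structure. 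Consequently $\underline{\mathfrak g}$ does too, and $F^\bullet\End(E_\star)$ restricts to the Hodge filtration on $\mathfrak g$. One may consult \cite{simpsonhiggs} for this circle of ideas.

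The interior statements — that $\underline{\mathfrak g}$ is flat and is a sub-VHS — are classical and cause no difficulty. The point demanding care, and the one I expect to occupy the bulk of a careful write-up, is the bookkeeping at the boundary divisor $D$: one must check that forming $\End(E_\star)$ from $E_\star$ and taking the canonical extension of $\underline{\mathfrak g}\subseteq\underline{\End}(\VV)$ are both compatible with Deligne's canonical extension, so that $\mathfrak g$, $\End(E_\star)$ and the ambient Hodge filtration all genuinely live in the parabolic category over $\bar X$, and so that the parabolic structure on $\mathfrak g$ induced by $\nabla$ agrees with the one it receives as a sub-object of $\End(E_\star)$.
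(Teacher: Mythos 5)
Your proof is correct, and it reaches the two conclusions by a slightly different route than the paper. For the statement that $\mathfrak{g}$ is a sub-local system of $\End(E_\star)$, the paper uses the Tannakian description of $G$ as the simultaneous stabilizer of all sub-local systems of the tensor constructions $\VV^{\otimes m}\otimes(\VV^\vee)^{\otimes n}$ and differentiates at the identity; you instead observe that $G$, being the identity component of the Zariski closure $H$ of $\img\rho$, is normal in $H$, so that $\mathrm{Ad}(\rho(\gamma))$ preserves $\mathfrak{g}$ for every $\gamma$. Your argument is the more elementary of the two and avoids invoking Chevalley-type statements. For the second conclusion, the paper simply sets $F^p\mathfrak{g}=F^p\End(E_\star)\cap\mathfrak{g}$ and asserts this makes $\mathfrak{g}$ a sub-variation; you supply the missing justification, namely Deligne semisimplicity of polarizable $\CC$-VHS plus Simpson's $\GG_m$-fixed-point characterization to see that each stable flat summand of $\End(\VV)$ containing a piece of $\mathfrak{g}$ itself underlies a VHS. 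That argument is sound (the $\GG_m$-orbit of a stable Higgs summand lands in the finite set of summand isomorphism classes and is therefore constant by connectedness), and it is genuinely more complete than what the paper writes. Your closing caveat about compatibility of the Deligne canonical extension with duals and tensor products at the boundary is well taken and is precisely why $\End(E_\star)$ must be formed in the parabolic category, as you say; the paper elides this point as well. No gaps.
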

\begin{proof}
	By definition $G$ fiberwise fixes all sub-local systems of $\VV^{\otimes m} \otimes (\VV^\vee)^{\otimes n}$ for all $m,n \geq 0$, i.e. if $\WW \subset \VV^{\otimes m} \otimes (\VV^\vee)^{\otimes n}$ then $G$ is the set of $g \in \GL(\VV_x)$ such that $g(\WW_x) \subset \WW_x$. Differentiating this last equation at the identity says that $\mathfrak{g}$ is a sub-local system of $\End(E_\star)$. If $\End(E_\star)$ has a Hodge filtration $F^\bullet$, then $\mathfrak{g}$ inherits a filtration by setting $F^p \mathfrak{g} = F^p\End(E_\star) \cap \mathfrak{g}$. 
\end{proof}
\section{Background on Isomonodromy}
    In this section we give an overview of the theory of isomonodromic deformations of flat vector bundles with regular singularities and the isomonodromy foliation on the relative de Rham moduli space. See \cite{landesman2025geometriclocalsystemsgeneral}, \cite{chen2011associatedmapnonabeliangaussmanin}, and \cite{biswas2019isomonodromicdeformationslogarithmicconnections} for more details on these topics. 
    \subsection{Atiyah bundles and deformation theory of connections}
    \begin{definition}
        Let $C$ be an algebraic variety with a vector bundle $E$. The sheaf of first-order differential operators, $\Diff^1(E, E)$, is the sheaf of $\CC$-linear maps $\delta \in \End(E)$ such that the map
        $$s \mapsto \delta_f(s) := \delta(fs) - f\delta(s)$$ is $\sho_X$-linear for all local sections $s$ of $E$ and $f$ of $\sho_C$. The Atiyah bundle $\At_X(E) \subset \Diff^1(E,E)$ is the subsheaf of those $\delta$ for which the map $\delta_f$ is locally given by multiplication by a section. 
    \end{definition}

    There is a natural short exact sequence
    $$0 \to \End(E) \to \At_X(E) \to T_X \to 0,$$
    and a connection on $E$ is the same thing as a splitting of this sequence.  Moreover (see \Cite[Definition 3.2.4, Remark 3.2.5]{landesman2025geometriclocalsystemsgeneral}), the Atiyah bundle exists in the more general setting of a filtered parabolic bundle $(E_\star, P^\bullet)$ on an algebraic variety $X$ with a smooth simple normal crossings compactification $X = \bar{X} \setminus D$. In this case, we have a short exact sequence
    $$0 \to \End(E_\star, P^\bullet) \to \At_{(X, D)} (E_\star, P^\bullet) \to T_X (-D) \to 0.$$

	Given a filtered vector bundle $(E, P^\bullet)$ on $X$ and a connection $\nabla$ with regular singularities along $D$, there is a canonical parabolic structure on $E$ induced by the connection, called the \textit{Deligne canonical extension}. It is characterized by the property that it is the unique holomorphic bundle on $\bar{X}$ that when restricted to $X$ is isomorphic to $(E,\nabla)$, and has all of the eigenvalues of the residues of its connection contained in the interval $[0,1).$ 
    
    By comparing this short exact sequence to the analogous sequence for an unfiltered bundle, we get a short exact sequence
   \begin{equation}\label{ATSeq}
        0 \to \At_{(X,D)} (E_\star, P^\bullet) \to \At_{(X, D)} (E_\star) \to \End(E_\star)_\star/\End(E_\star, P^\bullet)_\star \to 0.
   \end{equation}

    \subsection{Isomonodromic deformations and the isomonodromy foliation}
    
    To introduce isomonodromic deformations, we relativize the above construction to the case of a relative curve $X/S$. In this case, a connection $\nabla$ on a vector bundle $E$ over $X$ lets us form the \textit{de Rham complex} of $\End(E)$:
    $$\DR(\End(E))^\bullet := \End(E) \stackrel{\nabla}{\to} \End(E) \otimes \Omega_X^1 (\log D) \stackrel{\nabla}{\to} \End(E) \otimes \Omega^2_X (\log D) \to \dots.$$
    Likewise, one forms the de Rham complex of the Atiyah Bundle:
    $$\DR(\At_{E})^\bullet := \At_X (E) \to \End(E) \to \End(E) \otimes \Omega_X^1 (\log D)\to \End(E) \otimes \Omega_X^2 (\log D) \to \dots .$$
    There is a short exact sequence of complexes:
    $$0 \to \DR(\End(E))^\bullet \to \At_X(E)^\bullet \to T_X (-D) \to 0.$$

    Given $(E,\nabla)$ on $X$, the natural section of $\DR(\At_X(E))^\bullet \to T_X (-D)$ given by $\nabla$ induces a corresponding section $\sigma$ of $\HH^1(X, \At_X(E)^\bullet) \to H^1(X, T_X(-D))$. On the other hand, for $s \in S$, we have the Kodaira-Spencer map $\kappa: T_sS \to H^1(X, T_X(-D))$. We may take the fiber product of these two maps, and the connection induces a section $\sigma: T_sS \to T_{X, (E, \nabla}) \shm_{dR}(X/S, r)$: 
\[\begin{tikzcd}
	{T_{(X, (E, \nabla)} \shm_{dR}(X/S, r)} & {\HH^1(\DR(\At_X(E)^\bullet)} \\
	{T_sS} & {H^1(X, T_X(-D))}
	\arrow[from=1-1, to=1-2]
	\arrow[from=1-1, to=2-1]
	\arrow[from=1-2, to=2-2]
	\arrow["\kappa"', from=2-1, to=2-2]
	\arrow["\sigma"', shift right=2, curve={height=6pt}, dashed, from=2-1, to=1-1]
\end{tikzcd}\]
\begin{definition}
    Let $(\she, \nabla)$ be a flat vector bundle over $\shm_{g,n}$. The \textit{isomonodromy foliation} on $\shm_{dR} (\shc/\shm_{g,n}, \shd, r)$ is defined by the section $H^1(C, T_C) \to T_{\shc, (\she, \nabla)} \shm_{dR} (\shc/\shm_{g,n}, r)$ described above. 
\end{definition}

This is the local description of the isomonodromy foliation. Turning to the global description, the leaves of the foliation are described by the following proposition. To set notation, let $\pi: \shc \to \sht_{g,n}$ be the universal family over Teichmuller space, and let $s_1, \dots, s_n: \sht_{g,n} \to \shc$ be disjoint sections, with image $\shd$. Let $0 \in \sht_{g,n}$ and set $C = \pi\inv(0) \setminus D$, where $D = \shd \cap \pi\inv(0)$. 

\begin{lemma}
	Let $(E, \nabla)$ be a flat vector bundle on $C$ with regular singularities along the divisor $D$ of the punctures. If the eigenvalues of the residues of $\nabla$ along the points of $D$ do not differ by consecutive integers, then the inclusion $C \injto \shc \setminus \shd$ gives a unique extension of $(E, \nabla)$ to a logarithmic flat vector bundle $(\she, \tilde{\nabla})$ over $\shl$ with regular singularities along $\shd$, such that the residues of the eigenvalues of $\tilde{\nabla}$ lie in $[0,1)$. 
\end{lemma}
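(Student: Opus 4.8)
The plan is to produce the extension in two stages and then read off uniqueness. First I would extend the underlying local system off the central fiber $C$ to the whole universal punctured curve, and then run Deligne's canonical extension across $\shd$. For the first stage, recall that Teichm\"uller space $\sht_{g,n}$ is contractible, so Ehresmann's fibration theorem makes the pair $(\shc, \shd) \to \sht_{g,n}$ a locally trivial family and hence $\shc \setminus \shd \to \sht_{g,n}$ a fiber bundle over a contractible base. In particular the inclusion of the fiber $C \injto \shc \setminus \shd$ over $0$ induces an isomorphism $\pi_1(C,x) \stackrel{\sim}{\to} \pi_1(\shc \setminus \shd, x)$ (long exact homotopy sequence), so restriction to $C$ is an equivalence from $\CC$-local systems on $\shc \setminus \shd$ to those on $C$; via $\VV \mapsto (\VV \otimes_\CC \sho, 1 \otimes d)$ with inverse $(\mathcal{F}, \nabla) \mapsto \ker \nabla$ the same is true for flat vector bundles. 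Since $(E,\nabla)|_C$ is the flat bundle of its own local system $\VV := \ker \nabla$, it therefore has a unique flat extension $(\tilde E, \tilde\nabla) := (\tilde\VV \otimes_\CC \sho, 1 \otimes d)$ to all of $\shc \setminus \shd$, where $\tilde\VV$ is the unique local system there restricting to $\VV$.

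For the second stage I would apply Deligne's canonical extension to the smooth variety $\shc$, the smooth divisor $\shd = \bigsqcup_i s_i(\sht_{g,n})$, and the flat bundle $(\tilde E, \tilde\nabla)$. Near a component of $\shd$, the local triviality of the family identifies a deleted neighborhood with the product of a punctured disk and $\sht_{g,n}$, and since $\sht_{g,n}$ is simply connected the loop generating $\pi_1$ of that neighborhood is, in each fiber, the loop around the corresponding point of $D$; hence the local monodromy of $\tilde\VV$ around $\shd$ is conjugate to that of $\VV$ around $D$. Thus $(\tilde E, \tilde\nabla)$ has regular singularities along $\shd$, and the residue eigenvalues of its canonical logarithmic extension are exactly those of $\nabla$. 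Deligne's construction then yields a logarithmic flat bundle $(\she, \tilde\nabla)$ on $\shc$, restricting to $(\tilde E, \tilde\nabla)$, with all residue eigenvalues in $[0,1)$, and the non-resonance hypothesis (the eigenvalues of the residues of $\nabla$ do not differ by nonzero integers) forces this $[0,1)$-normalized lattice to be the only logarithmic extension with that property. Restricting to $\shl$ as in the statement gives the asserted $(\she, \tilde\nabla)$.

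Uniqueness now falls out: any logarithmic flat extension $(\she', \tilde\nabla')$ over $\shl$ of $(E,\nabla)$ with residue eigenvalues in $[0,1)$ restricts on $\shc \setminus \shd$ to a flat bundle restricting to $(E,\nabla)$ on $C$, hence is isomorphic to $(\tilde E, \tilde\nabla)$ by the equivalence of the first paragraph; so $\she$ and $\she'$ are two logarithmic extensions of one and the same flat bundle across the smooth divisor $\shd$ with the same residue normalization, and the uniqueness part of Deligne's theorem identifies them canonically, compatibly with the connections.

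The only ingredient here that is not pure formalism is the input that contractibility of $\sht_{g,n}$ propagates the topological monodromy of a single fiber canonically over the entire family; this is what simultaneously pins down the flat bundle on $\shc \setminus \shd$ and, more delicately, its local monodromies around $\shd$, so that the residue eigenvalues are merely conjugated---not shifted---by the extension and we stay inside the $[0,1)$ window. Granting that, the rest is just the Riemann--Hilbert correspondence and the Deligne canonical-extension package already recalled in Sections~2 and~4.1, and I expect no further obstacle.
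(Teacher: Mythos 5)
Your argument is correct and is essentially the paper's proof spelled out by hand: the paper simply cites Landesman--Litt (Lemma 3.4.2) for the construction of the universal isomonodromic deformation (contractibility of Teichm\"uller space plus the Deligne canonical extension across the sections) and Bakker et al.\ (Proposition 7.4) for uniqueness under the non-resonance hypothesis, which are exactly your two stages. The only quibble is that uniqueness of the $[0,1)$-normalized logarithmic lattice already follows from Deligne's theorem without non-resonance, so your parenthetical attributing it to the non-resonance hypothesis is slightly misplaced (though the paper's own proof makes the same attribution).
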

	\begin{proof}
		Starting with a logarithmic flat bundle on $C \cup D$ with regular singularities along $D$, \cite{landesman2025geometriclocalsystemsgeneral}[Lemma 3.4.2] gives the construction of $(\she, \tilde{\nabla})$ as a canonical extension of $(E, \nabla)$. The imposition that the eigenvalues of the residues of $\nabla$ do not differ by consecutive integers ensures that there is a unique extension of the corresponding representation $\rho: \pi_1(C) \to \GL_r(\CC)$ of the punctured curve as a logarithmic bundle \cite{bakker2024linearshafarevichconjecturequasiprojective}[Proposition 7.4].
	\end{proof} 
	 If the restriction of $(\she, \nabla)$ to $(C,D)$ is $(E, \nabla)$, we call $\shl$ the \textit{universal isomonodromic deformation} of $E$. The connection on $\she|_{C_t}$ for a very general fiber $C_t$ induces a natural parabolic structure on $\she|_{C_t}$, and this in turn induces a parabolic structure on the isomonodromic deformation $\she$. We denote the associated parabolic bundle by $\she_\star$, see \cite[Section 4.3]{biswas2019isomonodromicdeformationslogarithmicconnections}.

    \subsection{The Simpson Filtration and its Deformations}
    The isomorphism between the de Rham and the Dolbeault spaces given by the nonabelian Hodge correspondence is only real-analytic. Therefore if one applies this isomorphism to a component $Z$ of the locus of $\CC$-VHSs within $\shm_{Dol}(\shc/\shm, \shd, \GL_r)$, one would only obtain a real analytic subset of the de Rham space. Important for our main theorem is the fact that, when such a component is intersected with a leaf of the isomonodromy foliation, the resulting locus is actually complex analytic. We will actually prove a stronger result: such a statement is true not just for the locus where the Hodge filtration extends to the isomonodromic deformation, but in fact it holds for the locus where the Simpson filtration extends. 
    
    To begin, we must recall the construction of the Simpson filtration. A Simpson filtration \cite{simpson2008iterateddestabilizingmodificationsvector} on a flat vector bundle $(E, \nabla)$ is a decreasing Griffiths-transverse filtration $F^\bullet$ whose associated graded $(\bigoplus \gr^p_F E, \theta^p)$ is a semistable Higgs bundle (we then say that $E$ is $gr$-stable with respect to $F^\bullet$). It is generally not unique, though its associated graded pieces are. The Simpson filtration also has a notion of level defined in the same way as the level of the Hodge filtration. Importantly, if $(E, \nabla)$ underlies a $\CC$-VHS, then the Hodge filtration is a Simpson filtration. Simpson's filtration was first constructed in the context of flat vector bundles on proper algebraic varieties, but it has been shown to exist in the more general setting of parabolic bundles with respect to a smooth irreducible divisor, see \cite[Appendix A]{collier2024conformallimitsparabolicslnchiggs}.

   \begin{remark}One goal of the Simpson filtration was to provide a stratification of the de Rham moduli space. Indeed, letting $\Hdg \subset \shm_{Dol}(\shc/\shm_{g,n}, \shd \GL_r)$ be the image of the fixed point set of the natural $\GG_m$ action on the Dolbeault space, we may write 
   $$\Hdg = \bigcup_\alpha P_\alpha.$$

   We may then define a stratification of $\shm_{dR}(\shc/\shm_{g,n}, \GL_r)$ by defining $G_\alpha$ to be all the points $y$ whose limit as $t \to 0$ (in the sense of $\lambda$-connections) lies in $P_\alpha$. 
   \end{remark}
   
    \begin{lemma}\label{analyticsub}
        Let $\shl$ be a leaf of the isomonodromy foliation on $\shm_{dR}(\shc/\shm_{g,n}, \mathcal{D}, \GL_r)$, corresponding to the universal isomonodromic deformation of a logarithmic vector bundle $(E,\nabla)$ on an $n$-pointed curve. Let $\shz \subset \shm_{Dol}(\shc/\shm, \mathcal{D}, \GL_r)$ be the locus of gr-stable Higgs bundles. Let $\Phi: \shm_{Dol}(\shc/\shm, \shd, \GL_r) \to \shm_{dR}(\shc/\shm, \shd, \GL_r)$ be the real analytic isomorphism given by the nonabelian Hodge correspondence. Let $F^\bullet$ be a Simpson filtration on $(E_\star, \nabla)$, and let $\shu \subset \Phi(\shz) \cap \shl$ be the locus for which $F^\bullet$ extends to the universal isomonodromic deformation of $(E, \nabla)$. Then $\shu$ has the structure of a locally closed analytic subset of $\shl$. 
    \end{lemma}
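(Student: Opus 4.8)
The plan is to adapt Simpson's proof that the locus of complex variations of Hodge structure inside $\shm_{dR}$ is a countable union of analytic subsets (\autocite[Theorem 12.1]{simpson1996hodgefiltrationnonabeliancohomology}) to the relative setting of a leaf, and to arbitrary Simpson filtrations. The conceptual backbone is the Rees construction: a Griffiths-transverse filtration $F^\bullet$ on $(\she_\star,\nabla)$ whose associated graded Higgs bundle is semistable is the same datum as a $\GG_m$-semistable $\lambda$-connection over $\AA^1_\lambda$ --- the Rees bundle $\xi(\she_\star,F^\bullet)$ --- that specializes to $(\she_\star,\nabla)$ at $\lambda=1$ and to $(\gr_{F^\bullet}\she_\star,\theta)$ at $\lambda=0$; equivalently it is a point of the relative moduli space of $\lambda$-connections $\shm_{Hod}(\shc/\shm_{g,n},\shd,r)$ over $\AA^1_\lambda$ (whose fibers over $1$ and $0$ are $\shm_{dR}$ and $\shm_{Dol}$) whose $\GG_m$-orbit closure meets the fiber over $\lambda=0$. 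The point is that the condition ``this orbit closure reaches $\lambda=0$'' is a \emph{holomorphic} condition; this is what will promote the a priori only real-analytic set $\Phi(\shz)\cap\shl$ to a complex-analytic one. Since the isomonodromy foliation lifts to $\shm_{Hod}$ (\cite{landesman2025geometriclocalsystemsgeneral}, \cite{chen2011associatedmapnonabeliangaussmanin}), over the leaf $\shl$ the statement becomes: the locus of $y$ for which the $\GG_m$-orbit of $\xi(\she_\star|_y,F^\bullet_y)$ degenerates as $\lambda\to0$, with $F^\bullet_y$ restricting to $F^\bullet$ at the base point, is locally closed analytic in $\shl$.

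First I would build a complex-analytic parameter space over $\shl$. Let $\mathrm{Fl}\to\shl$ be the relative flag bundle parametrizing filtrations of $\she_\star$ by subbundles, compatible with the parabolic structure, of the numerical type $(\rk\gr^p_{F^\bullet}E_\star)_p$ fixed by $F^\bullet$ on the central fiber; this is a projective morphism. Griffiths transversality with respect to the flat connection $\tilde\nabla$ on the total space of the isomonodromic family is a closed condition, and semistability of the associated graded Higgs bundle is an open condition, so together they cut out a locally closed analytic subspace of $\mathrm{Fl}$; intersecting with the open locus where that associated graded is \emph{stable} and passing to the component $\mathcal{Q}_0$ containing the point $[(E_\star,F^\bullet)]$ gives a space whose image in $\shl$ is exactly $\shu$.

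The main obstacle is that the holomorphic image of an analytic set need not be analytic, so the crux is to prove that $\mathcal{Q}_0\to\shl$ is proper --- and this is exactly where the hypotheses enter. I would argue that a Griffiths-transverse filtration of $\she_\star$ over a neighborhood of the base point whose associated graded Higgs bundle is stable and which restricts to $F^\bullet$ at the base point is \emph{unique}: its associated graded is the unique deformation of $(\gr_{F^\bullet}E_\star,\theta)$ over the isomonodromic family, since stability rigidifies it and along a leaf the flat bundle $(\she,\tilde\nabla)$ on the total space is genuinely flat with locked monodromy, and an induction on the length of the filtration --- using that the graded pieces are stable, hence simple, together with the irreducibility of $\nabla$, to rule out competing extensions of each step --- then pins the filtration itself down. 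Uniqueness makes $\mathcal{Q}_0\to\shl$ injective, which together with projectivity of $\mathrm{Fl}\to\shl$ upgrades, after shrinking $\shl$, to properness of $\mathcal{Q}_0$ onto its image. Alternatively one can follow Simpson's original argument, using his analysis of the $\GG_m$-action on $\shm_{Hod}$, whereby the locus on which orbit limits at $\lambda=0$ exist is itself closed analytic.

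With properness established, Remmert's proper mapping theorem shows the image of $\mathcal{Q}_0$ in $\shl$ is analytic; intersecting with the open locus where the associated graded Higgs bundle remains stable --- which is what guarantees that the extended filtration is a genuine Simpson filtration, i.e.\ that we land in $\Phi(\shz)$ --- exhibits $\shu$ as a locally closed analytic subset of $\shl$. One last point to check is that ``$F^\bullet$ extends to the universal isomonodromic deformation'' is detected by the analytic space $\mathcal{Q}_0$ and not merely by a formal thickening: one extends $F^\bullet$ first over the formal neighborhood of a point of $\shu$ using the obstruction calculus of Section 4, and then invokes Artin approximation together with the projectivity of $\mathrm{Fl}\to\shl$; I expect this step to be routine.
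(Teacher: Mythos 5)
Your strategy is genuinely different from the paper's, and as written it has gaps. The paper's proof is much shorter: it passes to the universal cover of $\shl$, invokes the result of Gurjar that the locus where the Harder--Narasimhan filtration of the associated Higgs bundle extends along an isomonodromic family is locally closed analytic, and then observes that the Simpson filtration is produced from the trivial filtration by finitely many iterated destabilizing modifications, each step governed by such a Harder--Narasimhan filtration; the locus where the Simpson filtration extends is therefore a finite intersection of locally closed analytic sets. Your route --- a relative flag bundle $\mathrm{Fl}\to\shl$ cutting out Griffiths transversality (closed) and gr-(semi)stability (open), followed by pushing the resulting locus down to $\shl$ --- is closer in spirit to Simpson's original proof of the analyticity of the VHS locus, and could in principle be made to work, but the two steps you yourself flag as the crux are not established.

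First, the properness step: injectivity of $\mathcal{Q}_0\to\shl$ together with projectivity of $\mathrm{Fl}\to\shl$ does not make the image of the locally closed set $\mathcal{Q}_0$ locally closed analytic. Remmert applies to the closure $\overline{\mathcal{Q}_0}$, whose image is analytic, but a point of that image may be hit both by $\mathcal{Q}_0$ and by the boundary $\overline{\mathcal{Q}_0}\setminus\mathcal{Q}_0$, and ``shrinking $\shl$'' does not remove this; what is needed is that $\mathcal{Q}_0$ be closed in the preimage of an open subset of $\shl$, which is essentially the content of the Gurjar-type statement the paper cites and which your argument does not supply. Second, your uniqueness argument rests on the claim that the graded pieces of the filtration are ``stable, hence simple.'' This is false in general: stability is a property of the total associated graded Higgs bundle $(\gr_{F^\bullet}\she_\star,\theta)$, and the individual bundles $\gr^p_{F^\bullet}\she_\star$ need not be stable or simple, so the inductive step pinning down each $F^p$ does not go through as stated. (Uniqueness of the extension of a fixed $F^\bullet$ may well hold, but it requires a different argument.) The paper's reduction to the Harder--Narasimhan filtration sidesteps both issues, which is why it can afford to be so brief.
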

        \begin{proof}
            Let $\sht$ be the universal cover of $\shl$. Let $\she$ be the universal isomonodromic deformation of a logarithmic bundle $(E, \nabla)$ over $\sht$, and let $\she_\star$ denote the parabolic bundle with parabolic structure induced by the natural parabolic structure on $E$ given by the residues of $\nabla$. By the main result of \cite{Gurjar}, the locus within $\sht$ where the Harder-Narasimhan filtration of the corresponding Higgs bundle extends to $\she$ is a locally closed analytic subset of $\sht$. Hence, we may execute the construction of the Simpson filtration on $\she_\star$, as the Simpson filtration is obtained by iterative quotients of the Harder-Narasimhan filtration of $\gr_{F^\bullet} E_\star$, where $F^\bullet$ is a given Griffiths-transverse filtration on $E_\star$ (we begin with the trivial filtration). 

            Let us recall this construction breifly. If $\gr_{F^\bullet} \she_\star$ is not a semistable Higgs bundle, there is a maximal subbundle of the form $\mathcal{H} = \bigoplus \mathcal{H}^p$, with each $\mathcal{H}^p \subset F^p\she/ F^{p+1}\she_\star$. Then a new filtration $G^\bullet$ may be defined on $\she_\star$ by setting
            $$G^p = \ker\big( \she_\star \to \frac{\she/F^p \she_\star}{\mathcal{H}^{p-1}} \big).$$

            This construction terminates after finitely many steps (\cite{simpson2008iterateddestabilizingmodificationsvector}[Lemma 3.3], see also \cite[Appendix A]{collier2024conformallimitsparabolicslnchiggs} for the parabolic variant), and the resulting filtration is the Simpson filtration. For each intermediate filtration $G^\bullet$, the locus within $\sht$ for which the Harder-Narasimhan filtration of $\gr_{G^\bullet} \she_\star$ extends to the isomonodromic deformation is a locally closed analytic subset, hence the same is true for the finite intersection of all such loci. 
        \end{proof}

	Let us also justify our claim that a Simpson filtration records some data regarding semistability of a holomorphic bundle. The following result is essentially the same as \cite{landesman2025geometriclocalsystemsgeneral}[Lemma 4.1.5]. 
	
	\begin{lemma}\label{SimpsonDestabilizes}
		Let $(C,D)$ be an $n$-pointed hyperbolic curve, and let $(E_\star, \nabla)$ be a parabolic bundle on $C$ with respect to $D$, and let $F^\bullet$ be a Simpson filtration on $E_\star$. Let $i$ be the largest $i$ such that the map $\theta_i: F^i E_\star \to \gr_F^{i-1} E_\star \otimes \omega_C(D)$ is nonzero. If the connection $\nabla$ is irreducible, then $F^i E_\star$ has positive parabolic degree. In particular, $E_\star$ is not semistable if $i >0$.
	\end{lemma}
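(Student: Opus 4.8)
The plan is to analyze the Higgs field of the associated graded of the Simpson filtration, exploiting the semistability of $\gr_{F^\bullet} E_\star$ as a Higgs bundle together with irreducibility of $\nabla$. First I would fix the largest index $i$ with $\theta_i \colon F^i E_\star \to \gr_F^{i-1} E_\star \otimes \omega_C(D)$ nonzero; such an $i$ exists and is positive, since if all the $\theta_j$ vanished then $\gr_{F^\bullet} E_\star = F^\bullet E_\star/\mathrm{(shifts)}$ would be $\nabla$-invariant in a way that contradicts irreducibility of $\nabla$ (a proper flat subbundle would split off). The key point is that $F^i E_\star$ is then itself a $\theta$-invariant sub-Higgs-sheaf of $\gr_{F^\bullet} E_\star$ after identifying it (via the graded pieces $\gr_F^{\geq i}$) with the sub-object $\bigoplus_{p \geq i} \gr_F^p E_\star$, because $\theta_j = 0$ for $j > i$ forces the Higgs field to preserve this truncation.

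Next I would run the standard slope argument. Because $\gr_{F^\bullet} E_\star$ is semistable of parabolic degree $0$ (the Simpson filtration is built on a degree-zero flat bundle, so the associated graded Higgs bundle has vanishing parabolic degree), every sub-Higgs-sheaf has parabolic degree $\leq 0$; hence $\pardeg\bigl(\bigoplus_{p\geq i}\gr_F^p E_\star\bigr) \leq 0$. Now I would look at the complementary truncation: $F^i E_\star = \ker(E_\star \to E_\star/F^i E_\star)$, so $\pardeg F^i E_\star = \pardeg E_\star - \pardeg(E_\star/F^i E_\star) = -\pardeg(\gr_F^{<i} E_\star)$, using $\pardeg E_\star = 0$. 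The subtle step is relating $\pardeg F^i E_\star$ to $\pardeg(\gr_F^{\geq i}E_\star)$: these differ, but one is the negative of a degree that is constrained. Concretely, $\gr_F^{< i} E_\star = \bigoplus_{p<i}\gr_F^p E_\star$ is, by the same truncation reasoning applied from the other end together with the nonvanishing of $\theta_i$, \emph{not} a sub-Higgs-sheaf — rather it is a Higgs quotient, so semistability gives $\pardeg(\gr_F^{<i}E_\star)\geq 0$, with equality only if $\theta_i$ (and all lower $\theta_j$) were trivial, which contradicts the maximality choice of $i$. Strict inequality $\pardeg(\gr_F^{<i}E_\star) > 0$ then yields $\pardeg F^i E_\star = -\pardeg(\gr_F^{<i}E_\star)$...

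Here I need to be careful about signs, and I expect this to be the main obstacle: disentangling which truncation is a sub-object and which is a quotient of the graded Higgs bundle, and tracking the parabolic degree bookkeeping so that the inequality comes out with the correct sign to conclude $\pardeg F^i E_\star > 0$. The resolution is that the \emph{flat} subsheaf $F^i E_\star \subset E_\star$ has large parabolic degree precisely because $F^i E_\star$, viewed inside the graded Higgs bundle, corresponds to $\gr_F^{\geq i}$, a Higgs \emph{quotient} of $\gr_{F^\bullet}E_\star$ (the Higgs field lowers the filtration index, so $F^{\geq i}$ is $\theta$-stable as a quotient, not a sub), whence semistability forces $\pardeg(\gr_F^{\geq i}E_\star)\geq 0$ with strict inequality from nonvanishing of $\theta_i$; combined with $\pardeg(\gr_F^{< i}E_\star) = -\pardeg(\gr_F^{\geq i}E_\star)$ one would get $\pardeg F^i E_\star>0$ as claimed. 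Finally, for the last sentence: if $i>0$ then $F^i E_\star$ is a \emph{proper} nonzero subsheaf of $E_\star$ (properness because $F^0 E_\star = E_\star$ and $i>0$ means $F^i\subsetneq F^0$; nonzero because $\theta_i\neq 0$ needs $F^i\neq 0$) of strictly positive parabolic degree while $\pardeg E_\star = 0$, so $\mu_\star(F^iE_\star)>\mu_\star(E_\star)$, violating semistability of $E_\star$. I would also record the parallel reasoning at the level of coparabolic bundles if needed to make the degree computation compatible with Serre duality conventions, but this should be a routine check rather than a genuine difficulty.
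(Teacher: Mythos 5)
Your final resolution of the sub/quotient confusion is the correct one, and it is the same route the paper takes: since $\theta$ lowers the filtration index, $\bigoplus_{p<i}\gr_F^pE_\star$ is a sub-Higgs sheaf and $\bigoplus_{p\geq i}\gr_F^pE_\star$ is a quotient Higgs bundle whose parabolic degree equals $\pardeg F^iE_\star$; semistability of the degree-zero graded Higgs bundle then gives $\pardeg F^iE_\star\geq 0$. (Your first two attempts at this identification -- ``$\gr_F^{\geq i}$ is a $\theta$-invariant sub because $\theta_j=0$ for $j>i$'' and ``$\gr_F^{<i}$ is a Higgs quotient'' -- are both wrong, but you correct yourself, so I will not dwell on that.)

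The genuine gap is the strict inequality. You assert $\pardeg(\gr_F^{\geq i}E_\star)>0$ ``with strict inequality from nonvanishing of $\theta_i$,'' and in the middle paragraph ``with equality only if $\theta_i$ were trivial,'' but semistability alone does not deliver this. A degree-zero quotient of a merely semistable degree-zero Higgs bundle can perfectly well have a nonzero Higgs map out of the corresponding sub: take two degree-zero line bundles $A,B$ with a nonzero map $A\to B\otimes\omega_C$ and form the Higgs bundle $A\oplus B$ with that Higgs field; it is semistable, $B$ is a degree-zero sub-Higgs sheaf, the quotient $A$ has degree zero, and yet $\theta_1\neq 0$. So your argument as written only proves $\pardeg F^iE_\star\geq 0$, which does not yield the lemma. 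The missing ingredient is precisely where the irreducibility hypothesis must be used (you invoke it only to argue that $i$ exists, which is not where it is needed): irreducibility of $\nabla$ forces $\gr_FE_\star$ to be \emph{polystable} of degree zero; a degree-zero quotient of a polystable degree-zero Higgs bundle is necessarily a direct summand as a Higgs bundle; and a Higgs-theoretic splitting of $\gr_FE_\star\to\gr_F^{\geq i}E_\star$ forces $\theta_i=0$ (the section lands in $\ker\theta$ and its component in $\gr_F^i$ is the identity, so the $\gr_F^{i-1}\otimes\omega_C(D)$-component of $\theta$ applied to it is exactly $\theta_i$). That contradiction is what gives strict positivity, and it is the step you need to add.
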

		\begin{proof}
			By the definition of the Simpson filtration, the graded Higgs bundle $\gr_F E_\star$ is semistable of parabolic degree zero. The bundle $F^i E_\star$ is a parabolic quotient of $\gr_F E_\star$, so it has non-negative degree. Since the connection was supposed to be irreducible, the graded Higgs bundle is in fact polystable of degree zero, so $F^i E_\star$ has degree zero if and only if it is a summand of $\gr_F E_\star$, but this is not possible if $\theta_i \neq 0$. Since $F^i E_\star$ has positive parabolic degree while $E_\star$ has parabolic degree zero, it follows that $F^i E_\star$ destabilizes $E_\star$. 
		\end{proof}
	
	\begin{remark}
		One can also argue \Cref{SimpsonDestabilizes} directly from the construction of the Simpson filtration. Indeed, $F^\bullet$ is obtained by iteravely destabilizing a Griffiths-transverse filtration on the flat bundle $E_\star$, then defining a new filtration that incorporates the destabilizing subsheaves while preserving transversality. The trivial filtration $0 = F^1 \subset F^0 = E_\star$ is a Griffiths-transverse filtration, and if one runs Simpson's algorithm starting with this filtration, the graded Higgs bundle is equal to $E_\star$, so a destabilizing Higgs bundle is a destabilizing flat subbundle. Thus a nontrivial Simpson filtration can only be produced from an unstable parabolic bundle. 
	\end{remark}
    \subsection{Cohomological Interpretation of the Deformation theory of a Flat Bundle}
    Implicit in the relative construction above is the fact that the deformation theory of a flat vector bundle $(E,\nabla)$ on a fixed curve is controlled by $\HH^1(C, \DR(\End(E))$. This in fact extends to the parabolic setting, where we will work, see \cite[Prop. 5.2.1]{Bottacin}. Indeed, one may form the (parabolic) de Rham complex:
    
    $$\DR(\End(E_\star))^\bullet := \End(E_\star)_\star \to \End(E_\star) \otimes \Omega_C^1 (D) \to \End(E_\star) \otimes \Omega_C^2 (D) \to \dots.$$
    
     Then if $(E_\star, \nabla)$ is irreducible, the deformations are unobstructed, and the deformation space is exactly the $\HH^1$ of this complex. Moreover, if $(E_\star,\nabla)$ is equipped with a Griffiths-transverse filtration $F^\bullet$, there is an induced filtration of $\DR(\End(E_\star))$ and a spectral sequence

    $$\HH^i (\gr^p \DR(End (E_\star))) = E_1^{p, i-p} \implies \gr^p \HH^i(\DR(\End(E_\star))).$$

    For our purposes we must also consider the deformation theory of the associated Higgs bundle $(\gr_F E_\star, \theta).$ We can form the de Rham complex $\DR(\gr_F E_\star)$, just as well, but this time the differentials are the Higgs field. It follows that the deformation theory is controlled by $\HH^1(\DR(\End(\gr_F E_\star)_\star)).$  In particular, the spectral sequence above computes the deformation space of the associated graded Higgs bundle. 
    
    \begin{lemma}{(Cf. \cite[Lemma 7.1]{simpson2008iterateddestabilizingmodificationsvector})}
        Let $(E_\star, \nabla)$ be a parabolic bundle on a curve $C$ with respect to a reduced divisor $D = x_1 \dots + x_n.$ If $F^\bullet$ is a Griffiths-transverse filtration on $(E_\star,\nabla)$, then there is a natural spectral sequence $\HH^i (\gr_F^p \DR(\End(E_\star))) = E^{p,i-p}_1 \implies \gr^p_F \HH^i(\DR(\End(E_\star))).$ If $(\gr_F E_\star, \theta)$ is stable, then this spectral sequence degenerates at $E_1$. 
    \end{lemma}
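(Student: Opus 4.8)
The plan is to use the standard criterion that a spectral sequence associated to a filtered complex degenerates at $E_1$ precisely when the dimensions of the abutment match the total dimension of the $E_1$-page, i.e. when $\sum_{p}\dim E_1^{p,i-p} = \dim \HH^i(\DR(\End(E_\star)))$ for all $i$. Since the differentials on later pages can only decrease dimensions, degeneration at $E_1$ is equivalent to no cancellation occurring, and it suffices to produce a lower bound on the abutment matching the $E_1$-total, or equivalently to identify the abutment with the deformation space of a suitable object whose dimension we can compute independently. First I would recall, as set up just before the lemma, that the graded complex $\gr_F^p \DR(\End(E_\star))$ computes (the graded pieces of) the deformation theory of the associated graded Higgs bundle $(\gr_F E_\star,\theta)$: the differentials on $\gr_F \DR(\End(E_\star))$ are exactly the Higgs-field maps $\theta\colon \gr_F^p\End \to \gr_F^{p-1}\End\otimes\Omega^1_C(D)$, so $\bigoplus_p \HH^i(\gr_F^p\DR(\End(E_\star))) = \HH^i(\DR(\End(\gr_F E_\star)_\star))$.

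Next I would invoke the stability hypothesis. When $(\gr_F E_\star,\theta)$ is a stable parabolic Higgs bundle, its deformation theory is as clean as possible: it is a smooth point of the moduli space of parabolic Higgs bundles, the only infinitesimal automorphisms are scalars, and the hypercohomology groups $\HH^0, \HH^2$ of $\DR(\End(\gr_F E_\star)_\star)$ are one-dimensional (dual to each other by parabolic Serre duality, since $\End$ is self-dual and traceless endomorphisms contribute $\HH^2 = 0$ with the scalar part accounting for the remaining dimension). Hence the Euler characteristic together with the known $\HH^0,\HH^2$ pins down $\dim\HH^1$ of the graded complex. On the other hand, the non-graded complex $\DR(\End(E_\star))$ has the \emph{same} Euler characteristic (the filtration $F^\bullet$ does not change the underlying complex, only the filtration, so $\chi$ is filtration-independent), and its $\HH^0$ and $\HH^2$ are also one-dimensional: $\HH^0 = \HH^2$ is the space of flat endomorphisms, which by irreducibility of $\nabla$ — part of the standing hypotheses, and what forces $(\gr_F E_\star,\theta)$ to be at worst polystable — is just the scalars. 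Therefore $\dim\HH^i(\DR(\End(E_\star))) = \dim\bigoplus_p\HH^i(\gr_F^p\DR(\End(E_\star)))$ for $i = 0,1,2$, which is exactly the numerical equality forcing $E_1$-degeneration.

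The main obstacle I anticipate is the bookkeeping around parabolic structures and the precise self-duality of $\End(E_\star)$ in the parabolic category: one must check that $\widehat{\End(E_\star)}^\vee \otimes \omega_C(D)$ reproduces $\End(E_\star)$ correctly under the Serre duality of the cited Proposition, so that $\HH^0$ and $\HH^2$ are genuinely dual and both computed by flat (resp. Higgs-flat) endomorphisms. A second, related subtlety is justifying that irreducibility of $(E_\star,\nabla)$ is inherited appropriately so that $\HH^0(\DR(\End(E_\star))) = \CC$ — this is where one uses that an irreducible flat bundle has no nonscalar flat endomorphisms — and, on the Higgs side, that stability of $\gr_F E_\star$ (not merely polystability) gives $\HH^0(\DR(\End(\gr_F E_\star)_\star)) = \CC$ with no extra contributions. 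Once these identifications are in place, the degeneration is a formal consequence of the matching Euler characteristics and boundary cohomologies, with no further computation; this is the same mechanism as in \cite[Lemma 7.1]{simpson2008iterateddestabilizingmodificationsvector}, adapted to the parabolic setting.
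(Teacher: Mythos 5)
Your argument is correct and is essentially the paper's own proof (which in turn follows Simpson's Lemma 7.1): the Euler characteristic of $\DR(\End(E_\star))$ agrees with that of its associated graded, stability of $(\gr_F E_\star,\theta)$ (together with irreducibility on the flat side) pins $\HH^0$ and $\HH^2$ down to $\CC$ for both complexes, and the resulting dimension count forces all differentials past $E_1$ to vanish. The paper's version is terser but uses exactly this mechanism, so no further comment is needed.
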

        \begin{proof}
            Simpson only states this for stable Higgs bundles, but with the notion of the Simpson filtration in place for parabolic bundles, the same proof works in this new setting. Since $(\gr_F E_\star, \theta)$ is stable, we have $\HH^0(\DR(\End(E_\star))) = \HH^2(\DR(\End(E_\star))) = \CC$. The Euler characteristic of the complex $\DR(\End(E_\star))$ is the same before and after taking the associated graded complex. Since the $E_1$ page of the spectral sequence computes $\HH^1$ of the associated graded complex, all the differentials are forced to be zero. 
        \end{proof}
        Retaining the notation, we have the following:
    \begin{corollary}\label{SSdegeneratesprojection}
        If $(\gr_F E_\star, \theta)$ is stable, then there is a natural surjection 
        $$\HH^1 (\DR(\End(E_\star))) \surjto \bigoplus_{i=1}^\ell H^1(\Hom(\gr^i E_\star, \gr^{i-1} E_\star)).$$
    \end{corollary}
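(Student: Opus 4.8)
The plan is to bootstrap from the $E_1$-degeneration just established. Once the spectral sequence of the filtered complex $(\DR(\End(E_\star)),F^\bullet)$ degenerates, $\HH^1(\DR(\End(E_\star)))$ acquires a filtration whose graded pieces are the groups $\HH^1(\gr_F^p\DR(\End(E_\star)))$, and the target $\bigoplus_{i=1}^\ell H^1(\Hom(\gr^i E_\star,\gr^{i-1}E_\star))$ is exactly $H^1$ of $\gr_F^{-1}\End(E_\star)=\bigoplus_{i=1}^\ell\Hom(\gr_F^i E_\star,\gr_F^{i-1}E_\star)$, the summand of $\End(\gr_F E_\star)$ that lowers the Hodge grading by one. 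So the statement amounts to producing a natural surjection from the full deformation space onto the $H^1$ of this one distinguished graded piece.

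First I would extract a clean surjection $\HH^1(\DR(\End(E_\star)))\surjto\HH^1\big(\DR(\End(E_\star))/F^0\DR(\End(E_\star))\big)$, where $F^0\DR(\End(E_\star))$ is the step of the induced filtration that computes first-order isomonodromic deformations preserving the Simpson filtration. The mechanism is that the Euler-characteristic argument of the preceding lemma applies verbatim to the subcomplex $F^0\DR(\End(E_\star))$ and to the corresponding quotient complex, so both of their hypercohomology spectral sequences degenerate as well; comparing total dimensions then forces the boundary map $\HH^1(\DR(\End(E_\star))/F^0\DR(\End(E_\star)))\to\HH^2(F^0\DR(\End(E_\star)))$ to vanish, and the long exact sequence of the pair gives the surjection. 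Next I would analyze the quotient complex $Q^\bullet:=\DR(\End(E_\star))/F^0\DR(\End(E_\star))$, which on the curve is concentrated in degrees $0$ and $1$ and whose associated graded for $F^\bullet$ is the part in degrees $\le -1$ of the Dolbeault complex of $\End(\gr_F E_\star)$. Its degree-$1$ differential is a first-order operator whose symbol is generically surjective (it is built from $\nabla$, which has invertible symbol), so $Q^\bullet$ is, up to torsion supported on $D$, quasi-isomorphic to the kernel sheaf of that differential; the leading Higgs-field component of this kernel sheaf maps to $\gr_F^{-1}\End(E_\star)$ by an isomorphism away from finitely many points, and since $H^2$ vanishes on $C$ this yields the surjection $\HH^1(Q^\bullet)\surjto H^1(\gr_F^{-1}\End(E_\star))$. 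Composing the two surjections completes the argument.

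The hard part will be this last step: identifying the degree-$0$ cohomology sheaf of $Q^\bullet$ precisely, accounting for the torsion along $D$, and above all checking that the deeper graded layers $\gr_F^{\le -2}\End(E_\star)$ occurring in $Q^\bullet$ do not obstruct the projection onto $\gr_F^{-1}\End(E_\star)$. This is exactly where one needs the stability of $\gr_F E_\star$ rather than merely its semistability: via parabolic Serre duality it pins down the maps induced by the Higgs field among the $H^1$'s of the graded pieces, which is what makes the unwanted layers split off at the level of cohomology. Keeping the parabolic-versus-coparabolic bookkeeping straight in the duality is a further technical nuisance, though it does not affect the $H^1$-dimensions that eventually enter the rank bound.
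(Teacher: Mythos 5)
Your first step is sound and is exactly the mechanism the paper's proof rests on: $E_1$-degeneration of the filtered spectral sequence is equivalent to the exactness of
$0 \to \HH^n(F^{p+1}\DR) \to \HH^n(F^p\DR) \to \HH^n(\gr_F^p\DR) \to 0$ for all $p,n$, which in particular yields the surjection $\HH^1(\DR(\End(E_\star))) \surjto \HH^1\big(\DR(\End(E_\star))/F^0\big)$. The gap is in your second step. The paper never analyzes the quotient complex $Q^\bullet$ through its cohomology sheaves; it applies the degeneration once more to identify $\gr_F^p\HH^1(\DR(\End(E_\star)))$ with the $E_1$-term $\HH^1(\gr_F^p\DR(\End(E_\star)))$ for \emph{every} $p$, and then projects onto the $p=-1$ summand. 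Your route instead asserts that $Q^\bullet$ is quasi-isomorphic (up to torsion) to the kernel sheaf of its differential and that this kernel sheaf is generically isomorphic to $\gr_F^{-1}\End(E_\star)$. That identification fails: a local section of $\End(E_\star)/F^0\End(E_\star)$ coming from $F^{-1}\End(E_\star)$ is killed by the induced differential only if its class in $\gr_F^{-1}\End(E_\star)$ is annihilated by $\ad\theta\colon \gr_F^{-1}\End(E_\star) \to \gr_F^{-2}\End(E_\star)\otimes\Omega_C(D)$, so the part of $\ker$ that you can compare with $\gr_F^{-1}\End(E_\star)$ is generically a \emph{proper} subsheaf precisely when that component of the Higgs field is nonzero --- which is the generic situation the rest of the argument is built to exploit. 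There is no natural map from the degree-zero cohomology sheaf of $Q^\bullet$ onto $\gr_F^{-1}\End(E_\star)$, and you yourself flag this step as unresolved, so the proposal does not close.

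The repair is to drop the cohomology-sheaf analysis entirely: degeneration already tells you that the filtration on $\HH^1(\DR(\End(E_\star)))$ has graded pieces canonically isomorphic to the $E_1$-terms, so the deeper layers $\gr_F^{\leq -2}$ that worry you are automatically separated from the $\gr_F^{-1}$ piece because they occupy different filtration steps; one then projects (after splitting the filtration of complex vector spaces) onto the $p=-1$ graded piece. One caveat worth noting, which partly explains your detour: what degeneration actually hands you at $p=-1$ is $\HH^1$ of the two-term complex $\gr_F^{-1}\End(E_\star) \to \gr_F^{-2}\End(E_\star)\otimes\Omega_C(D)$, not the sheaf cohomology $H^1(\gr_F^{-1}\End(E_\star))$ appearing in the statement; these agree only up to the contribution of $\ad\theta$, and it is the hypercohomology version that is used in Step 3 of the proof of \Cref{parabolicrkbound}. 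Your attempt to land on the sheaf cohomology directly is what forced the unjustified kernel-sheaf claim.
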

        \begin{proof}
            The degeneration of the spectral sequence implies that $\HH^1(\DR(\End(E)))$ is graded by the terms on the first page, so we may project to the $\gr^{-1}$ piece. 
        \end{proof}

    Important to our work will be the following deformation-theoretic interpretation of the long exact sequence in cohomology of the Atiyah bundle exact sequence. 
    \begin{lemma}{Cf. \cite[Lemma 3.5.8]{landesman2025geometriclocalsystemsgeneral}}\label{codimrank}
        Let $(C,D)$ be a smooth proper curve with an $n$-pointed reduced effective divisor $D$, and let $(E_\star, \nabla)$ be a flat parabolic bundle with regular singularities along $D$. Let $F^\bullet$ be a filtration of $E_\star$. If $s \in H^1(C, T_C(-D))$ is a first-order deformation of $C$ whose image $q^\nabla(s)$ in $\At_{(C,D)} (E_\star)$ under $\nabla$ is a deformation of $(C,D,E_\star)$ for which $F^\bullet$ extends to the isomonodromic deformation $\she$, then 
        $q^\nabla(s) \in \ker(\HH^1(C, \At_{(C,D)}(E_\star) \to \HH^1(C, \shend(E_\star)/\shend(E_\star, F^\bullet)).$
    \end{lemma}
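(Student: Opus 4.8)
The plan is to read the map appearing in the statement as the one induced on cohomology by the surjection in \cref{ATSeq} (applied over $C$ with $P^\bullet=F^\bullet$), to identify its kernel by exactness with the image of the \emph{filtered} Atiyah cohomology $\HH^1(C,\At_{(C,D)}(E_\star,F^\bullet))$, and then to note that this image consists exactly of those first-order deformations of $(C,D,E_\star)$ to which $F^\bullet$ extends; since the hypothesis says $q^\nabla(s)$ is such a deformation, it lies in the kernel. This is the filtered parabolic analogue of \cite[Lemma 3.5.8]{landesman2025geometriclocalsystemsgeneral}.

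First I would unwind $q^\nabla$. By the construction of the isomonodromy foliation, $q^\nabla$ is the map induced on cohomology by the splitting $\nabla: T_C(-D)\to\At_{(C,D)}(E_\star)$ of the Atiyah sequence; hence $q^\nabla(s)\in\HH^1(C,\At_{(C,D)}(E_\star))$ is the class of the first-order deformation of the parabolic (non-flat) datum $(C,D,E_\star)$ obtained by transporting $E_\star$ along the curve deformation $s$ via $\nabla$ — equivalently, the deformation of $(C,D,E_\star)$ underlying the first-order restriction of the isomonodromic deformation $\she$. Then I would invoke the long exact sequence of \cref{ATSeq}: exactness at its middle term $\HH^1(C,\At_{(C,D)}(E_\star))$ identifies the kernel of the map to $\HH^1(C,\shend(E_\star)/\shend(E_\star,F^\bullet))$ with the image of $\HH^1(C,\At_{(C,D)}(E_\star,F^\bullet))$. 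So it suffices to lift $q^\nabla(s)$ along this last map.

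This is where the main input enters. By the deformation-theoretic interpretation of the Atiyah bundle of a filtered parabolic bundle (\cite[Definition 3.2.4, Remark 3.2.5]{landesman2025geometriclocalsystemsgeneral}, applied over $C$), the group $\HH^1(C,\At_{(C,D)}(E_\star,F^\bullet))$ classifies first-order deformations of the filtered datum $(C,D,E_\star,F^\bullet)$ — a deformation of $(C,D,E_\star)$ equipped with a compatible filtration by parabolic subbundles restricting to $F^\bullet$ on the special fiber — and the map to $\HH^1(C,\At_{(C,D)}(E_\star))$ forgets the filtration. By hypothesis, over the first-order neighborhood cut out by $s$ the isomonodromic deformation $\she$ carries exactly such a filtration extending $F^\bullet$; its class in $\HH^1(C,\At_{(C,D)}(E_\star,F^\bullet))$ maps to the class of its underlying deformation of $(C,D,E_\star)$, which is $q^\nabla(s)$. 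Hence $q^\nabla(s)$ lifts, and by the previous paragraph it lies in the kernel asserted in the statement.

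The hard part will be making precise the dictionary used in the last paragraph in the logarithmic and parabolic setting: one must check that an extension of $F^\bullet$ over $\she$ (restricted over the dual-number thickening determined by $s$) produces a \v{C}ech $1$-cocycle with values in $\At_{(C,D)}(E_\star,F^\bullet)$ lifting a cocycle representing $q^\nabla(s)$ — equivalently, that the infinitesimal transition automorphisms presenting $\she$ can be chosen to preserve $F^\bullet$ exactly when $F^\bullet$ extends — and that this can be arranged compatibly with the parabolic weights and with the non-resonance and regular-singularity conditions at $D$ that make $\At_{(C,D)}(E_\star)$ and its filtered subsheaf well-defined. Away from $D$ this is the standard deformation theory of a filtered vector bundle; near $D$ it is bookkeeping with residues, for which I would rely on the facts recalled earlier in the paper and on \cite{landesman2025geometriclocalsystemsgeneral} rather than reprove them. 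Note that only the implication ``$F^\bullet$ extends $\Rightarrow$ the class vanishes'' is needed, which is the easy direction of the correspondence, so no converse is required.
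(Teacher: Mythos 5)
Your argument is essentially the paper's proof: both identify the kernel via the long exact sequence attached to the filtered Atiyah sequence \eqref{ATSeq} and then lift $q^\nabla(s)$ to the filtered Atiyah cohomology $\HH^1(C,\At_{(C,D)}(E_\star,F^\bullet))$ using exactly the hypothesis that $F^\bullet$ extends to the first-order isomonodromic deformation determined by $s$. The only cosmetic difference is that the paper runs the same diagram chase through the de Rham complexes of the Atiyah bundles (hypercohomology of $\DR(\At_{E_\star,F^\bullet})^\bullet$ and its quotient) rather than the sheaves themselves, and it likewise asserts the lifting step without further elaboration.
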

        \begin{proof}
             First, note that one may form the de Rham complex for the Atiyah bundle of a filtered vector bundle:
            $$\DR(\At_{E, F^\bullet})^\bullet := \At_{(C,D)} (E, F^\bullet) \to \End(E, F^\bullet) \to \End(E, F^\bullet) \otimes \Omega_C^1.$$
            The long exact sequence in hypercohomology associated to the short exact sequence of complexes $0\to \DR(\At_{E, F^\bullet})^\bullet \to \DR(\At_)^\bullet \to \DR(\shend(E_\star)/\shend(E_\star, F^\bullet)$ induces a map 
            $$\HH^1(C, \DR(\At_{E_\star})^\bullet) \to \HH^1(C, DR(\shend(E_\star)/\shend(E_\star, F^\bullet)).$$
            Since $q^\nabla(s) \in \HH^1(C, \At_{E_\star}^\bullet)$ is in the image of 
            $$\HH^1(C, \DR(\At_{E_\star, F^\bullet})^\bullet) \to \HH^1(C, \DR(\At_{E_\star})^\bullet),$$
            its image in $\HH^1(C, \shend(E_\star)/\shend(E_\star, F^\bullet))$ vanishes.
        \end{proof}

    In particular, if the filtration $F^\bullet$ extends to germ of a codimension $\delta$ subvariety, then it follows that the cokernel of the above map on $H^1$ has rank $\delta$.

\section{Proofs}
    In this section we will prove the main theorems of the paper. We prove \Cref{rankbound} in two parts. First, we prove \Cref{parabolicrkbound}, the geometric content of \Cref{rankbound}, which is a parabolic variant of the main theorem with a slightly different rank bound. We then optimize this bound in \Cref{convexarg}. Finally, \Cref{nhldimension} will follow quickly from \Cref{rankbound}.  
    
\subsection{Outline of the Proof When $\delta = n = 0$}
	In this subsection we will give an abbreviated argument for \Cref{rankbound} in the simplest case when $\delta = n = 0$. This is only enough to recover \Cref{semistability} in the case of a curve with no marked points, but it should be enough to convey the idea of the main calculation while avoiding the complications introduced by working with parabolic bundles and having nonzero maps on cohomology. It may be worthwhile to compare this argument to \cite{landesman2025geometriclocalsystemsgeneral}[Theorem 1.3.4].
	
	Let $(E, \nabla)$ be a flat vector bundle on a very general curve with a Simpson filtration $F^\bullet$, and suppose that $F^\bullet$ extends to the very general curve in a leaf of the isomonodromy foliation. In this case, steps 1-4 of the proof (see the proof \Cref{parabolicrkbound} below) follow verbatim, so by chasing the Atiyah bundle exact sequence, we get a nonzero map $\psi: T_C \to \gr_F^{-1} \End(E)$ inducing the zero map on $H^1$. 
	
	Dually, we get a map $\psi^\vee: \gr^1_F \End(E) \otimes \omega_C \to \omega_C^2$ which vanishes on $H^0$. This map kills the image of the Higgs field $\ad\theta$ on $\End(E)$, so letting $(Q,0)$ be the cokernel of $\ad \theta: \gr_F^2 \End(E) \to \gr_F^1 \End(E) \otimes \omega_C$, we get a nonzero map $\varphi: Q \to \omega_C^2$ which vanishes on $H^0$. 
	
	It follows that $Q$ is not generically globally generated, so $\varphi$ kills any subbundle of $Q$ with Harder-Narasimhan slopes above $2g-1$. Letting $N_\bullet$ be the Harder-Narasimhan filtration of $Q$, and letting $N_s$ be the largest piece of $N_\bullet$ whose graded piece has slope above $2g-1$, we have a map $Q/N_s \to \omega_C^2$ which vanishes on cohomology. We may get a further map $\bar{\varphi}: N_{s+1}/N_s \to \omega_C^2$ which vanishes on $H^0$ by composing with the inclusion $N_{s+1}/N_s \injto Q/N_s$. 
	
	Since $(Q,0)$ was a quotient Higgs bundle of the stable Higgs bundle $\End(E) \otimes \omega_C$, we have $\mu(Q) > 2g-2$. This bound is also true for $N_{s+1}/N_s$, since the last graded piece of the Harder-Narasimhan filtration of $Q$ is a quotient of $Q$, and the successive graded pieces o f the Harder-Narasimhan filtration have decreasing slopes. Letting $U \subset N_{s+1}/N_s$ be the subbundle of $N_{s+1}/N_s$ generated by global sections, the non-ggg lemma \cite{landesman2025geometriclocalsystemsgeneral}[Proposition 6.3.6, or see \Cref{nonggg} below] says that $\rk N_{s+1}/N_S > g$, and the same is true for $\rk \gr_F^1 \End(E)$. To get a bound on the rank of $E$, and to introduce the level of the filtration into the bound, we use the AM-GM and Cauchy-Schwarz inequalities (see \Cref{convexarg} below).

\subsection{Variations on the non-ggg Lemma}
	Before diving into the proof proper, we collect some results on high slope bundles that we will use to make reductions.
	
	The output of the deformation theory in our argument will be a parabolic bundle of high slope with a nonzero map to $\omega_C^2(D)$ whose image on global sections is nonzero. We first aim to pass this map to a graded piece of the Harder-Narasimhan filtration of $Q_\star$.
	
\begin{lemma}\label{HNQuotient}
	Let $E_\star$ be a parabolic bundle with respect to a divisor $D$ on a hyperbolic curve $(C,D)$, and suppose there exists a nonzero map $\varphi: E_\star \to L$, where $L$ is a line bundle whose image on global sections has dimension $\delta$. Suppose any quotient parabolic bundle of $E_\star$ satisfies $\mu_\star(Q_\star) > 2g-2 + n$, and let $N_\bullet$ be the parabolic Harder-Narasimhan filtration of $E_\star$. Then there exists some positive integer $s$ such that the graded piece $\gr_{s+1}^N E_\star$ has a nonzero map to $L$ whose image on global sections is at most $\delta$-dimensional. 
\end{lemma}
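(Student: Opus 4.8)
The plan is to peel $E_\star$ apart along its parabolic Harder--Narasimhan filtration $0 = N_0 \subset N_1 \subset \dots \subset N_m = E_\star$, with graded pieces $\gr_i^N E_\star = N_i/N_{i-1}$ parabolic semistable of strictly decreasing slope, and to locate the first graded piece on which $\varphi$ survives. The key point is that, under the slope hypothesis, every graded piece has vanishing $H^1$, so the behaviour of $\varphi$ on global sections can be read off one piece at a time.

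First I would reinterpret the hypothesis: asking that every parabolic quotient $Q_\star$ of $E_\star$ satisfy $\mu_\star(Q_\star) > 2g-2+n = \deg\omega_C(D)$ is the same as asking that the minimal Harder--Narasimhan slope of $E_\star$ exceed $\deg\omega_C(D)$, and since those slopes strictly decrease this forces $\mu_\star(\gr_i^N E_\star) > \deg\omega_C(D)$ for every $i$. By parabolic Serre duality the coparabolic dual of such a piece, twisted by $\omega_C(D)$, is parabolic semistable of negative parabolic slope and hence has no global sections, so $H^1(\gr_i^N E_\star) = 0$ for all $i$; feeding this into the long exact cohomology sequences of the filtration inductively yields $H^1(N_j) = 0$ for all $j$, and in particular the restriction map $H^0(N_{j+1}) \to H^0(\gr_{j+1}^N E_\star)$ is surjective.

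Next I would let $s$ be the largest index with $\varphi|_{N_s} = 0$; this exists and satisfies $s \le m-1$ since $\varphi \ne 0$, and $s \ge 1$ because the top Harder--Narasimhan pieces, having parabolic slope above the threshold beyond which a parabolic semistable bundle is globally generated, are globally generated, so the restriction of $\varphi$ to them has image a line bundle which, being a quotient of a globally generated bundle, is globally generated by the image of its sections inside the $\delta$-dimensional space $\img(H^0(\varphi)) \subseteq H^0(L)$; given the slope lower bound such a quotient must satisfy, this is only possible if the restriction vanishes. Then $\varphi|_{N_{s+1}}$ annihilates $N_s$ and therefore factors through a map $\bar\varphi\colon \gr_{s+1}^N E_\star \to L$, which is nonzero by maximality of $s$. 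Finally, because $H^0(N_{s+1}) \to H^0(\gr_{s+1}^N E_\star)$ is surjective, the image of $H^0(\bar\varphi)$ coincides with the image of $H^0(N_{s+1}) \to H^0(L)$, which lies in $\img(H^0(\varphi))$ and is hence at most $\delta$-dimensional; this $s$ is the one asserted.

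The step I expect to be the main obstacle is the assertion that $\varphi$ must kill the maximal sufficiently-positive subbundle of $E_\star$, which is what gives $s \ge 1$: it requires pinning down the correct parabolic global-generation threshold from the vanishing provided by parabolic Serre duality, carefully tracking the parabolic structure induced on the image line bundle $\varphi(N_s)$, and genuinely using that $\img(H^0(\varphi))$ is only $\delta$-dimensional rather than merely that $\varphi$ has a kernel on global sections. Everything else is routine Harder--Narasimhan bookkeeping and long exact cohomology sequence chasing.
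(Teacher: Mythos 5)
Your main line of argument --- take $s$ maximal with $\varphi|_{N_s}=0$, factor $\varphi$ through $\gr_{s+1}^N E_\star$, and use the $H^1$-vanishing forced by the slope hypothesis to control the image on global sections --- is correct and is essentially the paper's own proof. (The paper runs the cohomology through the quotient $E_\star/N_s$ rather than through the subbundle $N_{s+1}$, but both versions hinge on $H^1(N_s)=0$, and your Serre-duality justification of that vanishing is if anything more careful than the paper's.)

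The one genuine error is your argument that $s\geq 1$, i.e.\ that $\varphi$ must annihilate the first nonzero Harder--Narasimhan piece. If $\varphi|_{N_1}\neq 0$, its image is a line subsheaf of $L$ of parabolic slope $>2g-2+n$, hence of ordinary degree at least $2g-1$, generated by the images of global sections of $N_1$ lying in the $\delta$-dimensional space $\img H^0(\varphi)$. But a line bundle of degree $\geq 2g$ is generated by two general sections, so this yields no contradiction whenever $\delta\geq 2$ --- and $\delta$ is large in the intended application. Your argument only succeeds for $\delta\leq 1$. Fortunately the claim is not needed: the ``positive integer $s$'' in the statement is an indexing artifact, the real content being only that \emph{some} graded piece of the Harder--Narasimhan filtration carries a nonzero map to $L$ with at most $\delta$-dimensional image on $H^0$; the first piece $N_1$ is a perfectly acceptable output, and the paper's proof explicitly permits the ``largest piece killed by $\varphi$'' to be the zero piece. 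Deleting your $s\geq 1$ argument leaves a complete and correct proof.
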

	\begin{proof}
		Let $N_s$ be the largest piece of the Harder-Narasimhan filtration of $E_\star$ which vanishes under $\varphi$. Then by the universal property of quotients, there is a nonzero map $E_\star/N_s E_\star \to L$. Since $\mu_\star(Q_\star) > 2g-2 + n$ for any quotient of $E_\star$, this bound is true for all of the graded pieces of $N_\bullet$. Therefore this lower bound is true for $N_s E_\star$ as well. It follows that the short exact sequence $0 \to N_s E_\star \to E_\star \to E_\star/N_s E_\star \to 0$ is exact on $H^0$, so the map $E_\star/ N_s E_\star \to L$ has at most $\delta$-dimensional image on $H^0$. We then compose this with the inclusion $\gr_{s+1}^N E_\star \injto E_\star/N_s E_\star$ to get the desired map. 
	\end{proof}
	
	With a semistable object in hand, we next twist down by a carefully chosen divisor on $C$ to kill the image of the map on global sections. 
\begin{lemma}\label{killsections}
	Let $E_\star$ be a parabolic bundle with respect to a divisor $D$ on a hyperbolic curve $(C,D)$, and suppose there exists a map $\varphi:E_\star \to L$, where $L$ is a line bundle. Suppose moreover that the image of the map $H^0(E_\star) \to H^0(L)$ has dimension $\delta$. Then there exists a parabolic subsheaf $E_\star' \subset E_\star$ such that the composition $E_\star' \injto E_\star \to L$ vanishes on $H^0.$ 
\end{lemma}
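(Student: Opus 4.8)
The plan is to realize $E_\star'$ as a twist $E_\star' := E_\star \otimes \sho_C(-B)$ for a suitably general effective divisor $B$ of degree $\delta$ on $C$ disjoint from $D$, and to let $W \subseteq H^0(L)$ denote the $\delta$-dimensional image of $\varphi$ on global sections. Since $B$ avoids $D$, the twist $E_\star'$ is again a parabolic bundle with respect to $D$ (the filtrations and weights at the points of $D$ are unchanged, $\sho_C(-B)$ being canonically trivial there), the inclusion $\sho_C(-B) \injto \sho_C$ realizes $E_\star'$ as a parabolic subsheaf of $E_\star$, and $\mu_\star(E_\star') = \mu_\star(E_\star) - \delta$; it is this drop by $\delta$ that ultimately feeds the codimension term into \Cref{rankbound}.

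Next I would identify what the composite $E_\star' \injto E_\star \stackrel{\varphi}{\to} L$ does on cohomology. Tensoring $\varphi$ by $\sho_C(-B)$ gives a factorization $E_\star' \to L(-B) \injto L$, so on $H^0$ the composite fits into the commutative square assembled from $H^0(E_\star') \to H^0(E_\star) \stackrel{\varphi}{\to} H^0(L)$ and $H^0(E_\star') \to H^0(L(-B)) \injto H^0(L)$. Using the sequence $0 \to L(-B) \to L \to L|_B \to 0$ to identify $H^0(L(-B))$ with the subspace of $H^0(L)$ of sections vanishing along $B$, this shows that the image of $H^0(E_\star') \to H^0(L)$ lies inside $W \cap H^0(L(-B))$. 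Hence it suffices to arrange $W \cap H^0(L(-B)) = 0$, i.e. that the evaluation map $W \to L|_B = \bigoplus_{i=1}^{\delta} L_{p_i}$ is injective, where $B = p_1 + \dots + p_\delta$.

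Finally I would choose the $p_i$ one at a time. Put $W_0 = W$; as long as $W_i \neq 0$, pick a nonzero section $s \in W_i$ and choose $p_{i+1}$ outside $D$, outside the finitely many zeros of $s$, and distinct from $p_1, \dots, p_i$; then $W_{i+1} := \{\, t \in W_i : t(p_{i+1}) = 0 \,\}$ has dimension $\dim W_i - 1$. After $\delta$ steps $W_\delta = 0$, and with $B = p_1 + \dots + p_\delta$ we obtain $W \cap H^0(L(-B)) = W_\delta = 0$, so the composite $E_\star' \injto E_\star \stackrel{\varphi}{\to} L$ vanishes on $H^0$. (When $\delta = 0$ the map $\varphi$ already kills $H^0$ and one takes $E_\star' = E_\star$.)

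I do not expect a real obstacle here: the lemma is soft, amounting to the fact that a $\delta$-dimensional linear system on a curve becomes base-point-free after twisting down by $\delta$ general points. The only points requiring care are bookkeeping ones --- that $B$ must avoid $D$ so $E_\star'$ stays a parabolic subsheaf with respect to the same divisor, and that the image of $H^0(E_\star')$ lands in the intersection $W \cap H^0(L(-B))$ and not merely in one of the two subspaces. The number of points, $\delta$, matches the drop in parabolic slope, which is why it is exactly this $\delta$ that propagates through the remainder of the argument.
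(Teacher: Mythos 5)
Your argument does prove the statement as literally written, and the inductive ``kill one dimension of $W$ per point'' step is fine. But your construction of $E_\star'$ is genuinely different from the paper's, and the difference is not cosmetic. The paper does \emph{not} twist by $\sho_C(-B)$: it performs an elementary modification at $\delta$ general points, choosing at each $p_i$ a hyperplane $H_i \subset E_\star|_{p_i}$ and taking $E_\star'$ to be the sections whose values at $p_i$ lie in $H_i$. With that construction the quotient $E_\star/E_\star'$ is torsion of length one at each of $\delta$ points, so $\pardeg E_\star'=\pardeg E_\star-\delta$ and the slope drops only by $\delta/\rk E_\star$. Your full twist $E_\star\otimes\sho_C(-B)$ produces a quotient of length $\delta\cdot\rk E_\star$ and a slope drop of $\delta$.

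This matters because the lemma is only useful through \Cref{goodsubquotient}, whose hypothesis (1) explicitly requires the quotient to be ``supported at most of length one at $\delta$ points,'' and through \Cref{nonggg}, whose hypothesis is $\mu_\star(E_\star')\geq\mu_\star(E_\star)-\delta/r$ and whose Riemann--Roch step yields $h^0(E_\star')\geq r(\mu_\star(E_\star)+1-g)-\delta$. Feeding in your $E_\star'$ instead gives $h^0(E_\star')\geq r(\mu_\star(E_\star)+1-g)-r\delta$, which degrades the conclusion from $r>cg-\delta$ to roughly $r(1+\delta)\geq cg$ and destroys the codimension term in \Cref{rankbound} and \Cref{nhldimension}. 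Your own remark that ``it is this drop by $\delta$ \emph{in slope} that feeds the codimension term'' is exactly where the mismatch shows: what must be preserved is a drop of $\delta$ in \emph{degree} (hence $\delta/r$ in slope). The fix is easy --- at each $p_i$ impose only the codimension-one condition $t(p_i)\in H_i$ for a hyperplane $H_i$ chosen so that the section $s_i$ lifting the $i$-th basis vector of $W$ violates it, rather than the codimension-$r$ condition $t(p_i)=0$ --- but as written your construction cannot be substituted into the rest of the proof.
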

	\begin{proof}
		Given a basis $\{v_1, \dots, v_\delta\}$ of the image of $\varphi$ on $H^0$, pick sections $s_i \in H^0(E)$ that map to $v_i$. We may evaluate $E_\star$ at $\delta$ general points (avoiding the points of $D$) to delete this basis: if $p_i \in C$, choose a complementary hyperplace $H_i \subset E_\star|_{p_i}$, then define $E_\star' \subset E_\star$ to be the subbundle of sections whose values at $p_i$ land in $H_i$. The induced map $H^0(E_\star') \to H^0(L)$ consequentially has rank $0$. 
	\end{proof}
	 
	Notably, the bundle $E_\star'$ is not generically globally generated, as the kernel of $E_\star' \to L$ has the same global sections as $E_\star'$.

	\begin{lemma}\label{goodsubquotient}
		Let $E_\star' \subset E_\star$ be a subbundle of a semistable parabolic bundle and further suppose:
		\begin{enumerate}
			\item The quotient $E_\star/E_\star'$ is torsion, supported at most of length one at $\delta$ points,
			\item Any parabolic quotient $Q_\star$ of $E_\star$ satisfies $\mu_\star(Q_\star) > \beta + n$.
			\item $\mu_\star(E_\star) \leq \alpha + n$.
		\end{enumerate}
		Then $\mu_\star (E_\star')$ is contained in the interval $(\beta - \frac{\delta}{\rk E_\star} + n, \alpha + n].$
	\end{lemma}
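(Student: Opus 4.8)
The plan is to extract each endpoint of the interval from a single short exact sequence together with one of the three hypotheses, so no deformation theory is needed here. Since $E_\star'$ has full rank inside $E_\star$, the quotient $T := E_\star/E_\star'$ is a torsion sheaf, and hypothesis (1) says its length is at most $\delta$; in the situation of interest (coming from \Cref{killsections}) this torsion is supported at $\delta$ points lying away from the parabolic divisor $D$. I would first record the numerics of
$$0 \to E_\star' \to E_\star \to T \to 0.$$
The ranks agree, $\rk E_\star' = \rk E_\star$; because $T$ is supported away from $D$, the induced parabolic weights of $E_\star'$ at the points of $D$ coincide with those of $E_\star$, so the parabolic contribution to $\pardeg(E_\star')$ equals that of $E_\star$; and $\deg E_\star' = \deg E_\star - \mathrm{length}(T) \ge \deg E_\star - \delta$. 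Hence $\pardeg(E_\star') \ge \pardeg(E_\star) - \delta$, and therefore $\mu_\star(E_\star') \ge \mu_\star(E_\star) - \frac{\delta}{\rk E_\star}$.

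For the lower endpoint I would apply hypothesis (2) to the trivial quotient $Q_\star = E_\star$ of $E_\star$ (equivalently, use that any quotient of the semistable bundle $E_\star$ has slope at least $\mu_\star(E_\star)$), obtaining $\mu_\star(E_\star) > \beta + n$. Combining with the displayed slope inequality gives $\mu_\star(E_\star') > \beta + n - \frac{\delta}{\rk E_\star}$, which is the desired strict lower bound. For the upper endpoint I would invoke semistability of $E_\star$ directly: the parabolic subsheaf $E_\star' \subset E_\star$ satisfies $\mu_\star(E_\star') \le \mu_\star(E_\star)$, and hypothesis (3) gives $\mu_\star(E_\star) \le \alpha + n$. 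Chaining these two facts yields $\mu_\star(E_\star') \le \alpha + n$, and combining the two endpoints gives $\mu_\star(E_\star') \in (\beta - \frac{\delta}{\rk E_\star} + n,\ \alpha + n]$ as claimed.

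There is no serious obstacle; the only point that needs care is the parabolic bookkeeping in the first paragraph, namely that replacing $E_\star$ by a full-rank subsheaf with torsion cokernel of length $\le \delta$ supported off $D$ drops the ordinary degree by at most $\delta$ while leaving the weight term of the parabolic degree unchanged. One should also be explicit that hypothesis (2) is understood to apply to $E_\star$ itself (or, what amounts to the same for a semistable bundle, that one only needs the weaker statement $\mu_\star(E_\star) > \beta + n$), since that inequality is exactly what powers the strict lower bound.
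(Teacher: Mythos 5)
Your proof is correct and follows essentially the same route as the paper: the upper bound comes from semistability of $E_\star$, and the lower bound comes from the short exact sequence $0 \to E_\star' \to E_\star \to E_\star/E_\star' \to 0$, which shows $\pardeg(E_\star') \geq \pardeg(E_\star) - \delta$ and hence $\mu_\star(E_\star') \geq \mu_\star(E_\star) - \frac{\delta}{\rk E_\star}$. Your extra care about the torsion being supported away from $D$ (so the weight term is unaffected) is a correct and worthwhile piece of bookkeeping that the paper leaves implicit.
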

		\begin{proof}
			The upper bound on $\mu_\star(E_\star)$ follows immediately from the semistability of $E_\star$. For the lower bound, the exact sequence $0 \to E_\star' \to E_\star \to E_\star/E_\star' \to 0$ shows that $\pardeg(E_\star') + \text{len}(E_\star/E_\star') = \pardeg(E_\star).$ Hence $\mu_\star(E_\star') \geq \mu(E-\star) - \frac{\delta}{\rk E_\star}.$
		\end{proof}

	Given the setup above, we are able to use a variation of the non-ggg lemma that is the main technical input to the rank bound in $\Cref{rankbound}$. 
	
	\begin{remark}
		Our version of the non-ggg lemma differs ever so slightly from the non-ggg lemma that appears in \cite{landesman2025geometriclocalsystemsgeneral}[Proposition 6.3.6], but it is functionally the same and one could complete the proof of \Cref{parabolicrkbound} using either proposition. We only state the lemma differently so that it more closely resembles the setup that arises in the proof of \Cref{parabolicrkbound}.
	\end{remark}
	\begin{proposition}\label{nonggg}
		Let $E_\star$ be a nonzero semistable parabolic bundle of rank $r$ with respect to a divisor $D$ on a hyperbolic curve $(C,D)$. Suppose that $E_\star' \subset E_\star$ is a rank $r$ subbundle which is
		 not generically globally generated,  If $E_\star$ is already not generically globally generated, assume $E_\star = E_\star'$ is semistable. Let $U \subset E_\star'$ be the subsheaf of $E_\star$ generated by global sections.  Suppose that $U$ has corank $c$, and that $\mu_\star(E_\star') \geq \mu_\star(E_\star) - \frac{\delta}{r}$. Then:
		\begin{enumerate}
			\item If $\mu(E_\star) > 2g-2$, then $r > cg -\delta$.
			\item If $\mu(E_\star) = 2g-2$, then $r \geq cg-\delta$.
		\end{enumerate}
	\end{proposition}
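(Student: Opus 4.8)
The plan is to run the argument of \cite{landesman2025geometriclocalsystemsgeneral}[Proposition 6.3.6], adapted so that the globally generated subsheaf is allowed to sit inside the auxiliary subsheaf $E_\star'$ rather than inside $E_\star$ itself, and so that the slope defect $\delta/r$ is carried along. Two observations do most of the work. First, $U$ is by construction the image of the evaluation $H^0(E_\star')\otimes\sho_C\to E_\star'$, so every global section of $E_\star'$ factors through $U$ and hence $h^0(U)=h^0(E_\star')$. Second, since $\mu_\star(E_\star')\geq\mu_\star(E_\star)-\delta/r$ and $\mu(E_\star)\geq 2g-2$, parabolic Serre duality forces $H^1(E_\star')$ to be small --- it vanishes outright once the parabolic slope is strictly above $2g-2+n$, and in general its dimension is controlled by the defect --- so that $h^0(U)=h^0(E_\star')\geq\chi(E_\star')=\pardeg(E_\star')+r(1-g)\geq r\,\mu_\star(E_\star)-\delta+r(1-g)$. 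Thus $U$ has many sections, of the order of $\deg E_\star'$.

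Against this lower bound I would put an upper bound coming from global generation. Write $\rho=\rk U=r-c$. A general choice of $\rho-1$ sections of the globally generated bundle $U$ spans a rank-$(\rho-1)$ subbundle $\sho_C^{\rho-1}\subset U$ whose quotient $L$ is a globally generated line bundle with $\deg L=\deg U$, so $h^0(U)\leq(\rho-1)+h^0(L)$. I would then bound $h^0(L)$ by Riemann--Roch when $\deg L\geq 2g-1$ (then $L$ is nonspecial, $h^0(L)=\deg L+1-g$), and by Clifford's inequality $h^0(L)\leq \deg(L)/2+1$ when $\deg L\leq 2g-2$. In both cases I also use semistability of $E_\star$ to bound the degree of $U$ from above: the saturation of $U$ in $E_\star$ has parabolic slope at most $\mu_\star(E_\star)$, so $\deg U\leq(r-c)\,\mu_\star(E_\star)$ up to the bounded discrepancy between parabolic degree and the degree of the underlying bundle.

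Writing $\mu=\mu_\star(E_\star)$ and combining: in the nonspecial regime $r(\mu+1-g)-\delta\leq h^0(U)\leq(r-c)+\deg U-g\leq(r-c)(\mu+1)-g$ rearranges to $r\geq c(\mu+1)+g-\delta$, which exceeds $cg-\delta$ since $\mu>2g-2\geq g-1$; in the special regime the estimate $r(\mu+1-g)-\delta\leq(r-c)+\deg(U)/2\leq r-c+g-1$ forces $r(g-2)<g-1-c+\delta$, which together with $c\leq r$ and a short separate check of the degenerate case $U=0$ is incompatible with $r\leq cg-\delta$; and in the boundary case $\mu=2g-2$ the same estimates hold with non-strict inequalities, yielding $r\geq cg-\delta$. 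The substantive difficulty here is not any single inequality but managing all of the error terms simultaneously --- the torsion quotient $E_\star/E_\star'$, the possible non-saturation of $U$, the gap between $\pardeg E_\star'$ and the ordinary degree of its weight-zero part, and the requirement that strictness survive exactly when $\mu>2g-2$ --- so that the conclusion comes out as $r>cg-\delta$ (respectively $r\geq cg-\delta$) on the nose. I expect this bookkeeping to be essentially the whole proof, the geometric content being a close adaptation of \cite{landesman2025geometriclocalsystemsgeneral}.
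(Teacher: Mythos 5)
Your overall skeleton matches the paper's: use $h^0(U)=h^0(E_\star')$, bound $h^0(E_\star')$ from below by Riemann--Roch together with the slope hypothesis $\mu_\star(E_\star')\geq\mu_\star(E_\star)-\delta/r$, bound $h^0(U)$ from above using global generation, and compare. But the upper bound you propose is too weak in the regime that actually occurs. Passing to $\rho-1$ general sections and the quotient line bundle $L=\det U$ gives $h^0(U)\leq(\rho-1)+h^0(L)$ with $\deg L=\deg U$, and since $\deg U$ can be as large as $(r-c)\mu_\star(E_\star)\approx(r-c)(2g-2)$, the line bundle $L$ is nonspecial as soon as $\rk U\geq 2$; your bound then reads $h^0(U)\leq(\rho-1)+\deg U+1-g$, which concentrates all of $\deg U$ into a single rank and loses a factor of $2$ compared with the vector-bundle Clifford bound $h^0(U)\leq\tfrac{1}{2}\deg U+\rk U$ that the paper uses. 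Concretely, take $g=10$, $c=1$, $\delta=0$, $\mu=2g-2=18$, $r=5$: the desired conclusion is $r\geq cg=10$, so the hypotheses must be contradictory; the paper gets $h^0(E_\star')\geq 5\cdot 9=45$ against $h^0(U)\leq 36+4=40$, a contradiction, whereas your nonspecial-regime bound gives only $h^0(U)\leq 3+63=66$, no contradiction. Your rearrangement in that regime also has an algebra slip: $r(\mu+1-g)-\delta\leq(r-c)(\mu+1)-g$ rearranges to $rg\geq c(\mu+1)+g-\delta$, not $r\geq c(\mu+1)+g-\delta$, and the corrected inequality only yields $r\gtrsim 2c+1$, far short of $r>cg-\delta$.

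The fix is exactly what the paper does: do not reduce to the determinant line bundle. Instead apply Clifford's inequality for vector bundles directly to $U$, in the form $h^0(U)\leq\tfrac{1}{2}\deg U+\rk U$, which is valid here because $U$ is globally generated (so its minimal Harder--Narasimhan slope is nonnegative) and sits inside the semistable bundle $E_\star$ of slope near $2g-2$ (so its maximal slope is bounded by roughly $2g-1$); summing the rank-wise Clifford/Riemann--Roch estimates over the Harder--Narasimhan pieces in that slope window gives the stated bound. Combined with $\deg U\leq(r-c)\mu(E_\star)$ and the Riemann--Roch lower bound, this yields $c(\mu+2)-2\delta\leq r(2g-\mu)$, whence $r\geq cg-\delta$ (strict when $\mu>2g-2$). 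Your special-regime and degenerate-case discussion is essentially sound but becomes unnecessary once the correct Clifford bound is in place.
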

		\begin{proof}
			 Since $U$ is a subbundle of $E_\star$, all of its Harder-Narasimhan slopes are bounded above by $2g-1$. It therefore satisfies the hypotheses of Clifford's theorem, and we conclude
			$$h^0(U) \leq \frac{\deg U}{2} + \rk U \leq \frac{\mu(E_\star)(\rk E_\star - c)}{2} + (\rk E_\star - c).$$
			
			In the final step, we have used that $\mu(U) \leq \mu(E_\star)$ by semistability. Since $E_\star'$ is not generically globally generated, we have $h^0(E_\star') = h^0(U)$. Riemann-Roch then says
			$$h^0(E_\star') \geq r (\mu(E_\star') + 1 - g) \geq r(\mu_\star(E_\star) +1 - g) - \delta.$$

			Using that $h^0(E_\star') = h^0(U)$ we may combine the previous two inequalities. Clearing the denominator of two, we have
			$$ r(2\mu(E_\star) + 2 - 2g )- 2\delta \leq (r-c)(\mu(E_\star + 2)).$$
			Simplifying, we have
			$$c (\mu(E_\star) + 2) - 2\delta \leq r(2g - \mu(E_\star))$$
			
			Since $\mu(E_\star) \geq 2g-2$, we have that $2g - \mu(E_\star) \leq 2$, and $\mu(E_\star) + 2 \geq 2g$. Whence
			$$2r \geq 2cg - 2\delta.$$
			Dividing by two obtains the result. We note that if the inequality on the slope $E_\star$ is strict, then so is the final result. 
		\end{proof}
\subsection{Proof of \Cref{rankbound}}
    Here, we will prove:
    \begin{theorem}\label{parabolicrkbound}
        Let $(C,D)$ be a pointed hyperbolic curve equipped with a regular flat vector bundle $(E,\nabla)$ with irreducible monodromy. Let $E_\star$ be the associated parabolic vector bundle given by the residues of $\nabla$. Suppose $(E_\star, \nabla)$ admits an isomonodromic deformation $(E_\star', \nabla')$ for which the Simpson filtration extends to a the germ of a codimension $\delta$ subvariety. If a Simpson filtration $F^\bullet$ of $E_\star'$ has level $\ell$, and the associated graded $(\gr_F E, \theta)$ is stable, then 
        $\rk \gr_F^1 \End(E_\star) \geq g- \delta$. 
    \end{theorem}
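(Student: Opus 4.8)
The plan is to follow the strategy sketched in \S4.1, now keeping track of the codimension $\delta$ and of the parabolic structures. Write $(C,D,E_\star,\nabla)$ for the point of $\shm_{dR}(\shc/\shm_{g,n},\shd,r)$ carrying the relevant isomonodromic deformation (so $E_\star$ is the $E_\star'$ of the statement), and let $F^\bullet$ be the Simpson filtration that extends over a codimension $\delta$ germ inside the leaf $\shl$ through this point. Since $\nabla$ is irreducible, deformations are unobstructed and the tangent space to $\shl$ is the image of $\sigma\colon H^1(C,T_C(-D))\to\HH^1(C,\DR(\At_{(C,D)}(E_\star)))$. Forming the filtered Atiyah bundle $\At_{(C,D)}(E_\star,F^\bullet)$ and the short exact sequence of de Rham complexes with quotient complex $\DR(\End(E_\star)/\End(E_\star,F^\bullet))$, \Cref{codimrank} identifies the sublocus of $\shl$ on which $F^\bullet$ extends with $\sigma^{-1}(\ker\rho)$, where $\rho\colon\HH^1(C,\DR(\At_{(C,D)}(E_\star)))\to\HH^1(C,\DR(\End(E_\star)/\End(E_\star,F^\bullet)))$ is the connecting map; as this sublocus has codimension $\delta$, the composite $\rho\circ\sigma$ has rank exactly $\delta$. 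Because $\gr_F E_\star$ is stable, the filtration spectral sequence (the lemma preceding \Cref{SSdegeneratesprojection}) degenerates at $E_1$, so I may project onto the weight $-1$ graded piece and, chasing the Atiyah sequence, obtain a morphism of parabolic sheaves $\psi\colon T_C(-D)\to\gr_F^{-1}\End(E_\star)$ whose induced map on $H^1$ has rank at most $\delta$. This $\psi$ is essentially the second fundamental form $\ad\bar\theta$ of $F^\bullet$ relative to $\nabla$, and it is nonzero because $F^\bullet$ has positive level: were $\bar\theta$ to vanish, $\gr_F E_\star$ would be a direct sum of at least two bundles with zero Higgs field, contradicting its stability.

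Next I would dualize. By parabolic Serre duality, $\psi$ dualizes to a nonzero morphism $\psi^\vee\colon\gr_F^1\End(E_\star)\otimes\omega_C\to L$ into a line bundle $L$ (a parabolic twist of $\omega_C^{\otimes 2}(D)$), and the rank-$\delta$ condition on $H^1$ becomes the statement that the image of $\psi^\vee$ on global sections is at most $\delta$-dimensional. Since $\psi$ is compatible with the Higgs field, $\psi^\vee$ annihilates the image of $\ad\theta$, hence factors through a nonzero morphism $\varphi\colon Q_\star\to L$, where $Q_\star$ is the cokernel, with zero Higgs field, of $\ad\theta\colon\gr_F^2\End(E_\star)\to\gr_F^1\End(E_\star)\otimes\omega_C(D)$. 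As $\gr_F^1\End(E_\star)$ lies in the traceless part of $\End(E_\star)$, the bundle $Q_\star$ is a quotient of a polystable parabolic Higgs bundle built from $\gr_F E_\star$ of parabolic slope $2g-2+n$, which supplies the slope hypotheses required below; and $\varphi$ still has at most $\delta$-dimensional image on $H^0$.

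Finally I would run the non-generic-global-generation machinery of \S4.2. By \Cref{HNQuotient}, some graded piece $\gr_{s+1}^N Q_\star$ of the parabolic Harder--Narasimhan filtration carries a nonzero map to $L$ with at most $\delta$-dimensional image on $H^0$ and is semistable of parabolic slope $>2g-2+n$. By \Cref{killsections} it contains a subsheaf $E_\star'$, obtained by deleting at most $\delta$ points, on which the composite to $L$ vanishes on $H^0$, so $E_\star'$ is not generically globally generated; \Cref{goodsubquotient} then bounds $\mu_\star(E_\star')$ from below, and \Cref{nonggg} (with corank $c\ge 1$, and $\mu(E_\star')>2g-2$) yields $\rk E_\star'\ge g-\delta$. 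Since $\rk E_\star'\le\rk Q_\star\le\rk\gr_F^1\End(E_\star)$, the asserted inequality $\rk\gr_F^1\End(E_\star)\ge g-\delta$ follows, and \Cref{rankbound} then results from combining this with the convexity estimate \Cref{convexarg}. I expect the hard part to be the first paragraph: making the filtered-Atiyah deformation bookkeeping precise -- verifying that ``$F^\bullet$ extends to a codimension $\delta$ germ'' really translates into the rank-$\delta$ statement for $H^1(\psi)$, that $\psi$ is a genuinely nonzero morphism of sheaves, and that it is Higgs-field compatible (so that $\varphi$ exists) -- while correctly handling the parabolic and coparabolic twists, which are also what force the ``delete $\delta$ points'' and ``$-\delta/\rk$'' corrections appearing in \Cref{goodsubquotient} and \Cref{nonggg}.
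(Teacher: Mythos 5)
Your proposal is correct and follows essentially the same route as the paper's proof: the filtered Atiyah-bundle bookkeeping via \Cref{codimrank}, projection to the $\gr_F^{-1}$ piece using the $E_1$-degeneration from stability of $\gr_F E_\star$, Serre dualization and factoring through the cokernel $Q_\star$ of $\ad\theta$, and then the \Cref{HNQuotient}--\Cref{killsections}--\Cref{goodsubquotient}--\Cref{nonggg} chain. The only step the paper makes more explicit is the initial invocation of \Cref{analyticsub} to pass to an analytically very general nearby curve in the leaf before running the first-order argument, which your setup implicitly assumes.
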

    
    \begin{proof}
        We prove the theorem in a number of steps. \\
        1) By \Cref{analyticsub}, the locus within $\shl$ for which the Simpson filtration of $(E_\star, \nabla)$ extends to the isomonodromic deformation has the structure of a locally closed analytic subset. Hence, by replacing $(C,D)$ with an analytically very general nearby curve, and $(E_\star, \nabla)$ with its isomonodromic deformation, we may assume that the Simpson filtration of $E_\star$ extends to $\she_\star$ to the germ of a codimension $\delta$ locus within a first order neighborhood of $[(C,D)]$. 

        2) Since $(E_\star, \nabla)$ extends to the germ of a codimension $\delta$ locus, the connection induces a map of complexes
        $$T_C(-D) \stackrel{q^\nabla}{\to} \DR(\At_{E_\star})^\bullet \to \DR(\shend(E_\star))^\bullet/\DR(\shend(E_\star, F^\bullet))^\bullet.$$ The image of $H^1(C, T_C(-D)) \to \HH^1(C, \DR(\At_{E_\star})^\bullet)$ maps to zero in \newline $\HH^1(\DR(\shend(E_\star))^\bullet/\DR(\shend(E_\star, F^\bullet))^\bullet)$ precisely when $F^\bullet$ extends to a first-order neighborhood of $(C,D)$ by \Cref{codimrank}.
        
        3) Taking $F^\bullet$ to be the Simpson filtration, Griffiths transversality implies that the map 
        $T_C (-D) \to \DR(\shend(E_\star))^\bullet/\DR(\shend(E_\star, F^\bullet))^\bullet$ factors through $\DR(End_{F^\bullet}^{-1}(E_\star))^\bullet,$ where $\End_{F^\bullet}^{-1}(E_\star) = \bigoplus_{p}\Hom(\gr_{F^\bullet}^p E_\star, \gr^{p-1}_{F^\bullet} E_\star).$
        we recognize $\HH^1(\DR(\End_{F^\bullet}^{-1}(E_\star)))$  as the deformation space of $(\gr_F E_\star, \theta)$. Since $(\gr_F E_\star, \nabla)$ is stable, \Cref{SSdegeneratesprojection} gives a surjection
        $$\HH^1(\DR(\End_{F^\bullet}^{-1}(E_\star))) \to \gr_F^{-1} \HH^1(\End(E_\star)) = \bigoplus_{i=1}^\ell \HH^1(\gr_F^i \End(E_\star) \to \gr_F^{i-1} \End(E_\star)).$$ Hence we get a nonzero map $\psi: T_C(-D) \to \gr_F^{-1}(\End(E_\star))$, which induces a map on $\HH^1$ with rank at most $\delta$. 

        4) The Higgs field on $\End(E_\star)$ is $\ad \theta: \gr_F^2\End(E_\star) \otimes T_C(-D) \to \gr_F^1\End(E_\star).$ By adjunction we have a map $\gr_F^2 \End(E_\star) \to \gr_F^1 \End(E_\star) \otimes \Omega_C(D)$. Let $Q_\star$ be the cokernel of this map. Since $(\End(E_\star), \ad\theta)$ is a stable parabolic Higgs bundle of parabolic slope $0$, the quotient Higgs bundle $(Q_\star, 0)$ is a quotient parabolic Higgs bundle with parabolic slope $\mu_\star(Q) > 2g-2 + n$. 
        
        5) Taking the Serre dual of $\psi$, we get a nonzero map $\psi^\vee :\gr_F^1 \widehat{\End(E_\star)} \otimes \omega_C (D) \to \widehat{\gr_F^0\End(E_\star)} \otimes \omega_C^2 (D)$. Importantly, since $[\theta,\theta] = 0$, $\psi^\vee$ factors through $Q_\star$. The Grothendieck spectral sequence for $\HH^*(\gr_F^2\End (E_\star) \to \gr_F^1 \End E_\star \otimes \Omega(D))$ degenerates on the second page, giving a surjection $\HH^1(\gr_F^2 \End(E_\star) \to \gr_F^1 \End(E_\star) \otimes \Omega (D)) \surjto H^0(Q)$. Thus the resulting map $H^0(Q_\star) \to H^0((\Omega^2)$ has rank at most $\delta$. Henceforth, we abuse notation and write that the rank of this map is still $\delta$, as a drop in rank would only improve our result.

        6) By \Cref{HNQuotient}, there exists a Harder-Narasimhan graded piece $N_\star$ of $Q_\star$  such that the induced map $N_\star \to \omega_C^2(D)$ is nonzero and whose image on $H^0$ is at most $\delta$. Moreover, by \Cref{killsections} there is a subsheaf $M$ of $N_\star$ of the same rank such that composition $M \injto N_\star \to \omega_C^2(D)$ vanishes on $H^0$ . By \Cref{goodsubquotient}, the parabolic slope of $M$ is at least $2g - 2 +n - \frac{\delta}{\rk M}$. By \cite{landesman2025geometriclocalsystemsgeneral}[Lemma 6.3.4], we have $\mu(M) > 2g- 2 - \frac{\delta}{\rk M}$.

		7) By \Cref{nonggg}[2] (noting that the subsheaf $U$ generated by global sections has corank at least one), we conclude that $\rk N_\star > g - \delta.$ Since this bound also holds for its saturation, which is a quotient of $\gr_F^1(\End(E_\star)) \otimes \Omega_C,$ the conclusion applies to $\gr_F^1(\End(E_\star))$ as well. 
    \end{proof}
  	
  \subsubsection{The bound on $\rk E_\star$}
    The result of \Cref{parabolicrkbound} was a bound on the rank of $\gr_F^1\End(E_\star)$. In this subsection, we will deduce from this a bound on the rank of $E_\star$, proving \Cref{rankbound}. 

\begin{corollary}\label{convexarg}
	Let $r = r_0 + \dots + r_\ell$ be a partition of a positive integer $r$ into $(\ell + 1)$ positive integers. Suppose $C$ is a quantity such that $\sum_{i=0}^{\ell - 1} r_{i+1} r_i \geq C$. Then $ r \geq \sqrt{C - \ell} + \ell$. 
	
	In particular, with $(E_\star, \nabla)$ as in the conclusion to \Cref{parabolicrkbound}, we may conclude that 
	$$\rk E_\star > \sqrt{g - \delta - \ell} + \ell.$$
\end{corollary}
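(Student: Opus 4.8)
The plan is to split the statement into a purely combinatorial optimization, fed by the geometric bound of \Cref{parabolicrkbound}. First I would observe that $r \geq \sqrt{C-\ell}+\ell$ is equivalent to $C \leq (r-\ell)^2 + \ell$, so — since the hypothesis supplies $\sum_{i=0}^{\ell-1} r_{i+1}r_i \geq C$ — it suffices to prove the universal upper bound
$$\sum_{i=0}^{\ell-1} r_{i+1}r_i \;\leq\; (r-\ell)^2 + \ell,$$
valid for every partition $r = r_0 + \dots + r_\ell$ into $\ell+1$ positive integers. (If $C < \ell$ the asserted bound is vacuous, and in any case $r \geq \ell + 1$.)

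To prove this inequality I would first apply AM--GM (or Cauchy--Schwarz) to consecutive terms, $r_{i+1}r_i \leq \frac{1}{2}(r_i^2 + r_{i+1}^2)$, which bounds the left side by $\sum_{i=0}^{\ell} r_i^2$. It then remains to maximize $\sum_i r_i^2$ over the polytope $\{\, r_i \geq 1,\ \sum_i r_i = r \,\}$. Since $x \mapsto x^2$ is convex, the maximum is attained at a vertex, where $\ell$ of the coordinates equal $1$ and the last equals $r - \ell$; there $\sum_i r_i^2 = (r-\ell)^2 + \ell$. This is the only place the constraint $r_i \geq 1$ enters, and it is indispensable: with only $\sum_i r_i = r$ no bound in $r$ alone would hold. (Alternatively, substitute $r_i = 1+t_i$ with $t_i \geq 0$, expand $\sum_i(1+t_i)(1+t_{i+1})$, and bound the cross term $\sum_i t_i t_{i+1} \leq \frac{1}{4}(\sum_i t_i)^2$ by separating even- and odd-indexed sums; the arithmetic then lands on the same estimate.)

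For the stated consequence I would take $r_i = \rk \gr_F^i E_\star$ for the $\ell+1$ graded pieces of the Simpson filtration, all nonzero since the level is exactly $\ell$, so that $\rk E_\star = \sum_i r_i$; Griffiths transversality identifies the degree-one part of the induced grading on $\End(E_\star)$ with $\bigoplus_i \Hom(\gr_F^{i-1}E_\star, \gr_F^i E_\star)$, whence $\rk \gr_F^1 \End(E_\star) = \sum_{i=0}^{\ell-1} r_{i+1}r_i$. Then \Cref{parabolicrkbound} bounds this quantity below by $g - \delta$, so the combinatorial part applied with $C = g - \delta$ yields $\rk E_\star \geq \sqrt{g - \delta - \ell} + \ell$ (strictly, if the input bound of \Cref{parabolicrkbound} is strict).

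I do not expect a genuine obstacle here: this corollary is essentially an exercise in convex optimization sitting on top of \Cref{parabolicrkbound}, where all the real work lives. The only points that demand care are the bookkeeping identity $\rk \gr_F^1 \End(E_\star) = \sum_i r_{i+1}r_i$ and the recognition that the clean closed form involving the level $\ell$ — as opposed to a bound purely in $r$ — is forced precisely by the positivity of every graded rank.
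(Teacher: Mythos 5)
Your proof is correct and follows essentially the same route as the paper's: AM--GM on the consecutive products $r_ir_{i+1}$ to reduce to bounding $\sum_i r_i^2$, then convexity to locate the maximum at the vertex of the simplex where $\ell$ of the $r_i$ equal $1$. Your version is in fact marginally cleaner --- the paper detours through $\frac{1}{4}\sum_i(r_i+r_{i+1})^2$ and Cauchy--Schwarz and then discards a factor of $2$, whereas you pass directly to $\sum_i r_i^2$ --- and you make explicit the bookkeeping identity $\rk \gr_F^1\End(E_\star)=\sum_i r_{i+1}r_i$ that the paper leaves implicit.
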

\begin{proof}
	We seek to find an upper bound on the expression $S := \sum_{i=0}^{\ell - 1} r_{i+1} r_i$, given that all $r_i \geq 1$ and $\sum_{i=0}^\ell r_i = r$. Applying the AM-GM inequality to each pair of terms $r_i r_{i+1}$, we see that $S \leq \frac{1}{4} \sum_{i=0}^\ell (r_i + r_{i+1})^2.$ By applying the Cauchy-Schwarz inequality to each expression $(r_i + r_{i+1})^2$, we further conclude that $S \leq \frac{1}{2} \sum_{i=0}^\ell r_i^2$. 
	
	To conclude, we note that by convexity of the function $x^2$, the expression $\sum_{i=0}^\ell r_i^2$ will be maximized when all but one of the $r_i$ are equal to one. Indeed, if $r_i \geq r_j$, then adding one to $r_i$ and subtracting one from $r_j$ will change $\sum_{i=0}^\ell r_i^2$ by 
	$$ (r_i+1)^2 + (r_j -1)^2 - (r_i + r_j)^2 = 2(r_i - r_j) + 2 >0.$$
	Thus $\sum_{i=0}^\ell r_i^2 \leq ((\sum_{i=0}^\ell r_i) - \ell)^2 + \ell$. We conclude then that $(r- \ell)^2 + \ell \geq C$.
\end{proof}

With this, we may now prove \Cref{unitarymonodromy}, which aims to show that if an isomonodromic deformation of $(E_\star, \nabla)$ underlies a variation of Hodge structure and if $\rk E_\star < \ssbound$, then $(E_\star, \nabla)$ has unitary monodromy.  With the criterion of semistability provided by \Cref{rankbound}, the proof is exactly the same as \cite{landesman2025geometriclocalsystemsgeneral}[Theorem 1.2.13], but we recapitulate it here.
\begin{proof}[Proof of \Cref{unitarymonodromy}]
	Denote an isomonodromic deformation of $(E_\star, \nabla)$ by $(E_\star', \nabla')$, and denote its Deligne canonical extension by $(\bar{E}_\star', \bar{\nabla}')$. Since $(\bar{E}_\star', \bar{\nabla}')$ is semistable and underlies a variation of Hodge structure, if the Hodge filtration $F^\bullet$ is nontrivial, then any Higgs field $F^i \bar{E}_\star' \to \gr^{i-1}_F \bar{E}_\star' \otimes \omega_C$ must vanish, for otherwise the image would destabilize $\bar{E}_\star'$. Thus the monodromy preserves the polarization on $\bar{E}_\star'$ and the monodromy representation is unitary. By definition of isomonodromic deformations, the same is true for $\bar{E}_\star$. 
\end{proof}

\subsection{Bounds on the Monodromy Group}
In this section, we prove \Cref{liealgbound}. After some setup, the proof is much the same as that of \Cref{parabolicrkbound}. Let $(E_\star, \nabla)$ be a parabolic bundle which underlies a polarizable complex variation of Hodge structure. Let $G$ be the algebraic monodromy group of $(E_\star, \nabla)$, and let $\mathfrak{g}$ be its Lie algebra. 

\begin{proof}[Proof of \Cref{liealgbound}]
	1) By the definition of the isomonodromic deformation of $(E_\star, \nabla)$, the restrictions of the isomonodromic deformation of $(E_\star, \nabla)$ to a very general curve all have the same corresponding monodromy representation, and hence they all have the same algebraic monodromy group. It follows from \Cref{subvhsend} that after replacing $(E_\star, \nabla)$ by its isomonodromic deformation, $\mathfrak{g}$ is a parabolic subbundle of $\End(E_\star)$. In particular, if $D$ is the connection on $\End(E_\star)$, then we have $D(\mathfrak{g})\subset \mathfrak{g} \otimes \Omega_C(D)$. Since $E_\star$ underlies a variation of Hodge structure, $\mathfrak{g}$ has a Hodge filtration, which we denote by $F^\bullet$.
	
	2) By steps 1-3 of the proof of \Cref{parabolicrkbound}, the deformed connection induces a map $\psi: T_C(-D) \to \gr_F^{-1} \End(E_\star)$. The Serre dual of this map is $\psi^\vee: \gr^1_F \End(E_\star) \otimes \Omega_C(D) \to \Omega_C(D)^2.$ This map comes from a connection on $\End(E_\star)$, and as such it preserves $\gr^1_F \mathfrak{g} \otimes \Omega_C(D)$. As such, we have a map $\gr^1_F \mathfrak{g} \otimes \Omega_C(D) \to \Omega_C(D)^2$ which induces a rank-at-most-$\delta$ map on $H^0$. 
	
	3) The map $\gr^1_F \mathfrak{g} \otimes \Omega_C(D) \to \Omega_C(D)^2$ comes from a Higgs field, hence it factors through $Q := \coker(\ad \theta: \gr^2_F \mathfrak{g} \to \gr^1_F \mathfrak{g} \otimes \Omega_C(D))$. By the Grothendieck spectral sequence, there is a surjection $H^0(\gr^1_F \mathfrak{g} \otimes \Omega_C(D)) \to H^0(Q)$, so we get a map $Q \to \Omega_C(D)^2$ inducing a map of rank-at-most-$\delta$ on $H^0.$ By our assumptions and since $\mathfrak{g}$ underlies a complex variation of Hodge structure, $\mu_\star(\gr^1_F \mathfrak{g} \otimes \Omega_C(D)) > 2g-2 + n$. If follows that $\mu_\star (Q) > 2g-2+n$ as well. 
	
	4) We have isolated a quotient bundle of $\mathfrak{g}$ which, numerically, is essentially the same as the one in the proof of \Cref{parabolicrkbound}. As such, we may invoke \Cref{nonggg} in the same way to conclude that $\rk \gr_F^1 \mathfrak{g} > g- \delta$. If $\ell$ is the level of $E_\star$, then the level of $\mathfrak{g}$ is $2k+1 < 2\ell + 1$. By the irreducibility of $\nabla$ we may assume that all the graded pieces of the Hodge filtration of $\mathfrak{g}$ are nonzero, hence we may conclude that $\rk \mathfrak{g} > g+2k -\delta$.
\end{proof}
\subsection{Dimension of Nonabelian Hodge Loci}
    We now specialize to the case of a flat vector bundle that underlies a polarizable $\CC$-VHS, in which case our result implies \Cref{nhldimension}. Recall that we are given a connected component $Z \subset \Hdg$ whose very general point underlies a polarizable complex variation of Hodge structure of level $\ell$. If $\shl \subset \shm_{dR}(\shc/\shm_{g,n}, r)$ is a leaf of the isomonodromy foliation, then we wish to show that the codimension of $Z^{dR} \cap \shl$ is greater than $\codimlowerbound$. 
        \begin{proof}[Proof of \Cref{nhldimension}]
            Under the natural map $\shm_{dR}(\shc/\shm_{g,n}, r) \to \shm_{g,n}$, we may view $Z^{dR} \cap \shl$ as an analytic subvariety of $\shm_{g,n}$ of codimension $\delta$. Hence if $[(C,D)] \in Z^{dR} \cap \shl$, it carries a flat vector bundle $(E, \nabla)$ such that the isomonodromic deformation of the Deligne canonical extension of $(E,\nabla)$ to a very general point of $Z^{dR}\cap \shl$ underlies a polarizable complex variation of Hodge structure of level $\ell$. Thus we may use the conclusion of \Cref{convexarg} and solve for $\delta$ We have 
            \begin{align*}
            	r &> \sqrt{g -\delta -\ell} + \ell
            \end{align*} 
            Whence
            \begin{align*}
            	(r- \ell)^2 &> g -\delta -\ell
            \end{align*}
            Thus
			\begin{align*}
            	\delta > g - (r-\ell)^2 - \ell.
            \end{align*}
			
			This gives the lower bound on $\delta$. 
        \end{proof}

\printbibliography
\end{document}